\numberwithin{equation}{section}
\definecolor{VerdeOlivo}{rgb}{0.3,0.5,0.1}
\definecolor{Magenta}{rgb}{.65,0.15,.2}
\definecolor{Gris}{gray}{0.3}
\newtheorem{Theorem}{Theorem}[section] 
\newtheorem{Definition}[Theorem]{Definition}
\newtheorem{Proposition}[Theorem]{Proposition}  
\newtheorem{Corollary}[Theorem]{Corollary}
\newtheorem{Example}[Theorem]{Example}
\newtheorem{Claim}[Theorem]{Claim}    
\newtheorem{Algorithm}[Theorem]{Algorithm}
\theoremstyle{definition}
\def\thinedge{thin}
\def\thickedge{very thick}
\def\colorfill{black!7}
\def\widthii{14}
\def\heightii{0.6}
\begin{document} 


\title[Optimum matchings in weighted bipartite graphs]{Optimum matchings in weighted bipartite graphs}


\author{Carlos E. Valencia}
\author{Marcos C. Vargas}
\address{
Departamento de Matem\'aticas\\
Centro de Investigaci\'on y de Estudios Avanzados del IPN\\
Apartado Postal 14--740\\
07000 Mexico City, D.F. 
} 
\email[Carlos E.~Valencia\footnote{Corresponding author}]{cvalencia@math.cinvestav.edu.mx, cvalencia75@gmail.com}
\email[Marcos C. Vargas]{esocram@gmail.com}
\thanks{Both authors was partially supported by CONACyT grant 166059.
The first author was partially supported by SNI and the second author was partially supported by CONACyT}

\keywords{Matching, Optimum matching, Assignment problem, Bipartite graph, Weighted bipartite graph}
\subjclass[2010]{Primary 05C85; Secondary 05C70, 68Q25, 90C08, 90C35.} 


\begin{abstract}
Given an integer weighted bipartite graph $\{G=(U\sqcup V, E), w:E\rightarrow \mathbb{Z}\}$ we consider the problems of finding all 
the edges that occur in some minimum weight matching of maximum cardinality and enumerating all the minimum weight perfect matchings. 
Moreover, we construct a subgraph $G_{cs}$ of $G$ which depends on an $\epsilon$-optimal solution of the dual linear program 
associated to the assignment problem on $\{G,w\}$ that allows us to reduced this problems to their unweighed variants on $G_{cs}$.
For instance, when $G$ has a perfect matching and we have an $\epsilon$-optimal solution of the dual linear program 
associated to the assignment problem on $\{G,w\}$, we solve the problem of finding all the edges that occur in some
minimum weight perfect matching in linear time on the number of edges.
Therefore, starting from scratch we get an algorithm that solves this problem in time $O(\sqrt{n}m\log(nW))$, 
where $n=|U|\geq |V|$, $m=|E|$, and $W={\rm max}\{|w(e)|\, :\, e\in E\}$.
\end{abstract}

\maketitle


\section{Introduction} 

Given an {\it integer weighted bipartite graph} $\{G,w\}$, that is, a bipartite graph $G=(U\sqcup V, E)$ with 
bipartitions $U$ and $V$ and an integer weight function $w:E\rightarrow \mathbb{Z}$ over the edges of $G$.
A {\it matching} of $G$ is a set of edges $M$ which are vertex disjoint.
Moreover, a matching is called {\it perfect} if it covers all the vertices of $G$.
A bipartite graph is called {\it feasible} if it has at least one perfect matching.
The {\it weight} of a matching $M$ is given by
\[
w(M)=\sum_{uv\in M}\; w(uv).
\]
A matching of maximum cardinality and minimum weight will be called {\it optimum matching}.
In this paper we reserve the symbols $m=|E|$, $n=|U|\geq |V|$ and $W$ to denote the maximum absolute weight.

Matchings in graphs is one of the most important topics in combinatorial optimization and has been extensively 
studied since the early nineteenth century until now. 
Moreover, it have given origin to several key ideas in combinatorial optimization.
For bipartite graphs with no weights there are algorithms for finding maximum cardinality matchings~\cite{hopcroft, AltBlumMehlhornPaul},
for enumerating all the perfect matchings~\cite{Takeaki2,FukudaMatsui},
for finding all the edges that occur in some maximum cardinality matching~\cite{tassa, costa, regin}, etc.
Finding matchings have many applications in mathematics, computer sciences, operations research, biochemistry, electrical engineering, etc.
For weighted bipartite graphs there are several algorithms that solve the {\it assignment problem}, which consist of finding
a perfect matching of minimum weight, see for instance~\cite{Kuhn, Gabow, DinicKronrod, auctionA_Bert1, gabo_tarj1, Gold_ken2}.
The reader can consult~\cite{burkard, lovasz, line_opt1, sch} and the references therein for more information about assignment problems.

As we mentioned before, several authors have worked on the problem of finding all the edges that occur in some perfect
matching of a bipartite graph and also on the problem of enumerating all the perfect matchings of a bipartite graph.
In this article we solve efficiently the weighted variants of these problems. 
More precisely, we give algorithms for finding all the edges that occur in some minimum weight perfect 
matching and for enumerating all the minimum weight perfect matchings of a weighted bipartite graph $\{G,w\}$.
Moreover, given an optimal solution $P$ of the dual linear program associated to the assignment problem on $\{G,w\}$
we construct a subgraph $G_{cs}(P)$ of $G$, such that if we can solve a problem involving the perfect matchings of $G$, 
then we can use the subgraph $G_{cs}(P)$ to solve the weighted variant of this problem. 
For instance, since we can enumerate all the perfect matchings of a bipartite graph, 
then we can enumerate all the minimum weight perfect matchings of a weighted bipartite graph.
The following result, which will be proven later, provides the relation between the 
optimum perfect matchings of $\{G,w\}$ and the perfect matchings of $G_{cs}(P)$.

{\bf Theorem~\ref{correspondence}}
{\it If $P$ is an optimal solution of the dual linear program associated to the assignment problem on $\{G,w\}$, 
then the set of minimum weight perfect matchings of $\{G,w\}$ is equal to the set of perfect matchings of $G_{cs}(P)$.}

The article is organized as follows:
In Section~\ref{FGopt} we present the linear program that models the assignment problem and its dual. 
We define what is an optimal solution and an $\epsilon$-optimal solution of these linear programs.
Also, we construct the subgraph $G_{cs}(P)$ from an optimal and $\epsilon$-optimal solution $P$ of this dual linear program.
In Section~\ref{applications} we use $G_{cs}(P)$ to finding all the edges that occur in some minimum weight perfect 
matching and enumerating all the minimum weight perfect matchings. 
Also, as an application, we solve the problem of finding a perfect matching where some edges are prefered.
Finally, in section~\ref{FGoptNPM} we give some strategies to deal with these problems when the bipartite
graph has no perfect matchings and we are interested in optimum matchings.

The main result of this paper is based on the following key observation that follows directly from the Complementary Slackness Theorem, 
which can be found in~\cite[Theorem 4.5]{line_opt1}. 
Given a linear program in standard form with decision variables ${\bf x}=(x_1, \ldots, x_r)$, cost vector ${\bf c}=(c_1, \ldots, c_r)$, 
$q\times r$ constraint matrix ${\bf A}$ with columns $A_j$, and dual variables ${\bf p}=(p_1, \ldots, p_q)$. 
Then the optimal primal solutions of this linear program are characterized by the following result:

\begin{Corollary}\label{optimal_face_001}
If ${\bf p}$ is an optimal dual solution, then a feasible primal solution ${\bf x}$ is optimal if and only if $x_j=0$ 
for all  $j$ such that $(c_j- \textbf{p}\cdot A_j)\neq 0$.
\end{Corollary}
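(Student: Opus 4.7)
The plan is to derive the corollary as a direct consequence of strong duality together with the elementary identity for the duality gap. First I would fix the primal in standard form as the minimization of $\mathbf{c}\cdot\mathbf{x}$ subject to $\mathbf{A}\mathbf{x}=\mathbf{b}$, $\mathbf{x}\geq \mathbf{0}$, and write its dual as the maximization of $\mathbf{p}\cdot\mathbf{b}$ subject to $\mathbf{p}\cdot A_j\leq c_j$ for every $j$. Dual feasibility of $\mathbf{p}$ then guarantees that the reduced costs $\bar{c}_j:=c_j-\mathbf{p}\cdot A_j$ are all nonnegative.

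The key computation is that, for any primal-feasible $\mathbf{x}$,
\[
\mathbf{c}\cdot\mathbf{x}-\mathbf{p}\cdot\mathbf{b}=\mathbf{c}\cdot\mathbf{x}-\mathbf{p}\cdot(\mathbf{A}\mathbf{x})=\sum_j\bar{c}_j\, x_j.
\]
Because each $\bar{c}_j\geq 0$ and $x_j\geq 0$, the right-hand side is a sum of nonnegative terms, hence it vanishes if and only if $\bar{c}_j x_j=0$ for every $j$, which is equivalent to requiring $x_j=0$ whenever $\bar{c}_j\neq 0$.

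To conclude I would invoke strong duality: since $\mathbf{p}$ is an optimal dual solution, the common optimal value of both programs equals $\mathbf{p}\cdot\mathbf{b}$. Therefore a primal-feasible $\mathbf{x}$ is optimal if and only if $\mathbf{c}\cdot\mathbf{x}=\mathbf{p}\cdot\mathbf{b}$, and by the displayed identity this is equivalent to the complementary-slackness condition stated in the corollary, namely $x_j=0$ for every $j$ with $c_j-\mathbf{p}\cdot A_j\neq 0$.

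There is essentially no obstacle here: the entire content of the corollary is the above algebraic identity combined with strong duality, both of which are contained in Theorem 4.5 of~\cite{line_opt1}. The only point to watch is the sign convention for the dual, which must be chosen so that $\bar{c}_j\geq 0$; without this, the step that converts ``the sum $\sum_j \bar{c}_j x_j$ is zero'' into ``each summand is zero'' would fail.
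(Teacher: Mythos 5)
Your proof is correct. The paper itself does not spell out an argument: it obtains the corollary in one line as a direct consequence of the Complementary Slackness Theorem (Theorem 4.5 of~\cite{line_opt1}), applied with the dual vector $\mathbf{p}$ held fixed at an optimal solution. What you do instead is unwind the standard proof of that theorem: the duality-gap identity $\mathbf{c}\cdot\mathbf{x}-\mathbf{p}\cdot\mathbf{b}=\sum_j(c_j-\mathbf{p}\cdot A_j)x_j$, nonnegativity of each summand from dual and primal feasibility, and strong duality to identify ``$\mathbf{x}$ optimal'' with ``gap equal to zero.'' The two routes are mathematically the same fact; the paper's version buys brevity by leaning on the textbook theorem, while yours buys self-containedness and makes explicit exactly where dual feasibility (so that the reduced costs $\bar{c}_j$ are nonnegative, which is what lets a vanishing sum force each term to vanish) and strong duality (so that the common optimal value is $\mathbf{p}\cdot\mathbf{b}$, with weak duality giving the converse direction) are used. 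One small point worth keeping in mind, which you already flag: the sign convention for the dual must match the standard-form primal $\min\{\mathbf{c}\cdot\mathbf{x}:\mathbf{A}\mathbf{x}=\mathbf{b},\ \mathbf{x}\geq\mathbf{0}\}$, as it does in the paper's setting, since otherwise the termwise-vanishing step breaks down.
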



\section{The subgraph $G_{cs}$ for feasible bipartite graphs}\label{FGopt}

In this section we define the main object of this paper: the subgraph $G_{cs}$. 
Before we define it, we present the linear program that models the assignment problem, and its dual.
After that, we define what means optimal and $\epsilon$-optimal solutions of the linear assignment program and its dual.

The assignment problem can be modeled by the following linear program, which we call the assignment program.
\begin{equation} \label{line_assig_problem_def_001}
\begin{tabular}{rrl}
minimize:		&	$\displaystyle \sum_{uv\in E} M(uv)w(uv)$	&	\\ 
subject to:	&	$\displaystyle \sum_{v\in N(u)} M(uv) = 1$	&	$\forall u\in U$,  \\ 
		&	$\displaystyle \sum_{u\in N(v)} M(uv) = 1$	&	$\forall v\in V$,  \\
		&	$\displaystyle M(uv)  \geq  0$				&	$\forall uv\in E$, 
\end{tabular}
\end{equation}
where $M$ represents the incidence vector of the matching, see~\cite[Section 7.8]{line_opt1}.
Since the matrix that defines this linear program is the incidence matrix of the bipartite graph $G$,
then it is totally unimodular and therefore we can assume that $M(uv)\in \{0,1\}$ for all $uv\in E$; see~\cite[chapter 18]{sch}.
The dual linear program of~\ref{line_assig_problem_def_001}, that we call dual assignment program, is given by:
\begin{equation}\label{dual_assig_problem_def_001}
\begin{tabular}{rcl}
max:		&	$\displaystyle \sum_{u\in U} \pi(u) + \sum_{v\in V} p(v)$	& \\ 
		&																& \\
subject to:	&	$\displaystyle \pi(u)+p(v) \leq w(uv)$						&	$\forall uv\in E.$  
\end{tabular}
\end{equation}
The dual variables $\pi:U \rightarrow \mathbb{R}$ are associated to the constraints $\sum_{v\in N(u)} M(uv) = 1$ for each $u\in U$ and
the dual variables $p:V \rightarrow \mathbb{R}$ to the constrains $\sum_{u\in N(v)} M(uv) = 1$ for each $v\in V$.
Therefore, the pair $P=(\pi, p)$, also called \emph{dual prices} or simply \emph{prices}, is the set of dual variables of~\ref{dual_assig_problem_def_001}.
Derived from the complementary slackness theorem, we get the following result.

\begin{Proposition}\label{complem_slack_004}
Let $M$ be a perfect matching of $G$ and $P=(\pi,p)$ prices of the dual assignment program.
Then $M$ is a minimum weight perfect matching of $G$, and $P$ are optimal prices of~\ref{dual_assig_problem_def_001}  if and only if
\begin{eqnarray*}
\pi(u)+p(v)&\leq& w(uv) \quad \forall \, uv\in E, \\
\pi(u)+p(v)&=&w(uv) \quad \forall \, uv\in M. 
\ \end{eqnarray*}
\end{Proposition}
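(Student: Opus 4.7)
The plan is to derive the proposition directly from LP duality, using the complementary slackness characterization already recorded as Corollary~\ref{optimal_face_001}. The key identity driving both directions is that, because $M$ is a perfect matching, the matching edges partition $U\sqcup V$, so
\[
\sum_{uv\in M} \bigl(\pi(u)+p(v)\bigr) \;=\; \sum_{u\in U}\pi(u) + \sum_{v\in V} p(v).
\]
Once this is in hand, the proposition becomes a clean application of weak/strong duality.

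For the direction ($\Leftarrow$), I would assume the two conditions and read the first inequality as dual feasibility of $P$ for~\ref{dual_assig_problem_def_001}. The matching $M$ (viewed as its $0/1$ incidence vector) is primal feasible for~\ref{line_assig_problem_def_001}. Using the second condition together with the identity above, the primal objective $\sum_{uv\in M} w(uv)$ equals the dual objective $\sum_{u\in U}\pi(u)+\sum_{v\in V}p(v)$. Weak duality then forces both to be optimal simultaneously.

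For the direction ($\Rightarrow$), assume $M$ is a minimum weight perfect matching and $P=(\pi,p)$ is optimal for~\ref{dual_assig_problem_def_001}. Dual feasibility is exactly the first condition $\pi(u)+p(v)\le w(uv)$ for all $uv\in E$. To get the equality on matching edges, I would apply Corollary~\ref{optimal_face_001} with the primal being~\ref{line_assig_problem_def_001}: the decision variables are indexed by edges, the cost of variable $M(uv)$ is $w(uv)$, and the column $A_{uv}$ of the incidence matrix has exactly two ones, in the rows indexed by $u$ and $v$. Hence the dual slack of the edge $uv$ is $w(uv) - P\cdot A_{uv} = w(uv)-\pi(u)-p(v)$. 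The corollary says that for any optimal primal $M$, if this slack is nonzero then $M(uv)=0$. Contrapositively, every $uv\in M$ (where $M(uv)=1$) must have zero slack, giving $\pi(u)+p(v)=w(uv)$.

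There is essentially no real obstacle here beyond bookkeeping: one has to be careful that $M$ may be assumed $0/1$-valued (as noted in the paragraph introducing~\ref{line_assig_problem_def_001}, because the constraint matrix is totally unimodular) so that the set of edges $uv\in M$ coincides with the support of the incidence vector, which is what makes Corollary~\ref{optimal_face_001} translate cleanly into the stated tightness condition. Everything else is direct substitution.
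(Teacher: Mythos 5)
Your argument is correct, and it is essentially the route the paper intends: the paper states this proposition without a written proof, simply as a consequence of complementary slackness, and your write-up fills in exactly that derivation — weak duality plus the identity $\sum_{uv\in M}(\pi(u)+p(v))=\sum_{u\in U}\pi(u)+\sum_{v\in V}p(v)$ for the ($\Leftarrow$) direction, and Corollary~\ref{optimal_face_001} applied to the incidence-matrix columns for the ($\Rightarrow$) direction. No gaps; this matches the paper's approach.
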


Now, let $\epsilon>0$, $M$ be a perfect matching of $G$, and $P=(\pi,p)$ be dual prices of~\ref{dual_assig_problem_def_001}.
We say that $M$ and $P$ are $\epsilon$\emph{-optimal solutions} of the assignment program if and only if
\begin{center}
\begin{tabular}{rcll}
$\pi(u)+p(v)$	& $\leq$	& $w(uv)+\epsilon$	& $\forall \, uv\in E$, \\
$\pi(u)+p(v)$	& =			& $w(uv)$ 			& $\forall \, uv\in M$. 
\end{tabular}
\end{center}

\begin{Proposition}\label{eps_CS_implies_neps_001}
Let $M$ be a perfect matching and $P=(\pi,p)$ dual prices that are $\epsilon$-optimal.  
If $M^*$ is a perfect matching of minimum weight of $G$, then $w(M)\leq w(M^*)+n\epsilon$.
\end{Proposition}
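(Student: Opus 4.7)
The plan is to use a weak-duality style comparison, exploiting equality on $M$ and the near-inequality on all edges for $M^*$. Since $G$ admits a perfect matching, $|U|=|V|=n$, and both $M$ and $M^*$ have exactly $n$ edges covering each vertex of $U$ and each vertex of $V$ exactly once.

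First I would use the equality condition of $\epsilon$-optimality on $M$ to rewrite the weight of $M$ in terms of the dual prices:
\begin{equation*}
w(M)=\sum_{uv\in M}w(uv)=\sum_{uv\in M}\bigl(\pi(u)+p(v)\bigr)=\sum_{u\in U}\pi(u)+\sum_{v\in V}p(v),
\end{equation*}
where the last step uses that $M$, being perfect, touches each $u\in U$ and each $v\in V$ exactly once.

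Next I would apply the relaxed inequality $\pi(u)+p(v)\leq w(uv)+\epsilon$, which holds for \emph{every} edge of $G$ and in particular for every edge of $M^*$, to bound $w(M^*)$ from below:
\begin{equation*}
w(M^*)=\sum_{uv\in M^*}w(uv)\geq\sum_{uv\in M^*}\bigl(\pi(u)+p(v)-\epsilon\bigr)=\sum_{u\in U}\pi(u)+\sum_{v\in V}p(v)-n\epsilon,
\end{equation*}
again using that $M^*$ is perfect and contains exactly $n$ edges.

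Combining the two displays gives $w(M^*)\geq w(M)-n\epsilon$, which is the claim. There is no real obstacle here: the argument is just the standard weak-duality estimate adapted to the $\epsilon$-slack setting, and the key observation is that the $n$ copies of $\epsilon$ arise because each of the $n$ edges of $M^*$ loses at most $\epsilon$ when we replace $w(uv)$ by $\pi(u)+p(v)$.
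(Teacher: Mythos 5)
Your proof is correct and is essentially the paper's own argument: both use the equality $\pi(u)+p(v)=w(uv)$ on the edges of $M$, the fact that the dual prices of any perfect matching sum to $\sum_{u\in U}\pi(u)+\sum_{v\in V}p(v)$, and the relaxed inequality $\pi(u)+p(v)\leq w(uv)+\epsilon$ on the $n$ edges of $M^*$. You merely make explicit the intermediate step that the paper compresses into a single chain of (in)equalities.
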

\begin{proof}
Directly from the definition of $\epsilon$-optimal solutions, we get that
\[
w(M)=\sum_{uv\in M}\; w(uv)=\sum_{uv\in M}\;(\pi(u)+p(v))\leq \sum_{uv\in M^*}\; (w(uv)+\epsilon) = w(M^*)+n\epsilon.\vspace{-9mm}
\]
\end{proof}

Since we have integral weights, it follows from Proposition~\ref{eps_CS_implies_neps_001} that if $M$ 
and $P$ are $\epsilon$-optimal solutions for $\epsilon<1/n$, then $M$ is of minimum weight.


\subsection{The auxiliary subgraph $G_{cs}$}\label{G_cs_01}

Given a weighted bipartite graph $\{G,w\}$ and optimal prices $P=(\pi,p)$, we define $G_{cs}(P)$ as follows: 

\begin{Definition}\label{E_min_def_001}
Let $G_{cs}(P)=(U\sqcup V, E_{cs})$, where $E_{cs}(P) = \{ uv\in E \;|\; \pi(u)+p(v)=c(uv) \}$
\end{Definition}

Note that $G_{cs}(P)$ is obtained by removing the edges $uv$ of $G$ such that $w(uv)-\pi(u)-p(v)\neq 0$.
Directly from the definition of $G_{cs}$ we get the following result:

\begin{Proposition}\label{optimum_iif_in_G_min_001}
The subgraph $G_{cs}(P)$ can be constructed in linear $O(m)$ time.
\end{Proposition}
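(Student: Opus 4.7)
The plan is essentially trivial, because the definition of $G_{cs}(P)$ prescribes an edge filter that can be tested locally in constant time per edge. Assuming that the prices $\pi(u)$ and $p(v)$ are stored in arrays indexed by the vertices (which can be built in $O(n)$ time from the input description of $P$), and that the weight $w(uv)$ is available when we traverse the edge $uv$, the filter ``is $\pi(u)+p(v)=w(uv)$?'' reduces to two additions and one equality test.

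First, I would make explicit the input model: the graph $G$ is given, say, by adjacency lists together with the weight $w(uv)$ attached to each edge record, and the prices $P=(\pi,p)$ are given as arrays of size $|U|$ and $|V|$ respectively. Under this standard RAM model, each lookup of $\pi(u)$, $p(v)$ and $w(uv)$ is $O(1)$.

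Next I would describe the construction: iterate once through the edge set $E$; for each edge $uv\in E$, compute $\pi(u)+p(v)-w(uv)$ and, if this quantity is zero, emit $uv$ into $E_{cs}(P)$ (for instance by appending it to the adjacency lists of the new graph). The total work is one constant-time test per edge of $G$, giving a running time of $O(m)$. Initialising the empty adjacency lists of $G_{cs}(P)$ takes $O(n)$ time, but since by the blanket assumption $m\ge n-1$ is the only interesting regime (isolated vertices contribute nothing to the matching problem), this is absorbed into $O(m)$; if one wants to be fully precise one states the bound as $O(n+m)$.

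There is no genuine obstacle here: the only thing to watch is that we are assuming the prices are given, not that we must compute them (the paper is explicit that $P$ is an input to the construction), so no dual-solving cost enters the bound. I would therefore close the proof with one sentence noting that since $G_{cs}(P)$ is obtained from $G$ by deleting the edges where $w(uv)-\pi(u)-p(v)\neq 0$, and each such test is $O(1)$, the construction runs in $O(m)$ time.
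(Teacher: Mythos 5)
Your proposal is correct and matches the paper's reasoning: the paper gives no explicit proof, simply observing that $G_{cs}(P)$ is obtained by deleting the edges $uv$ with $w(uv)-\pi(u)-p(v)\neq 0$, which is exactly the one-pass, constant-test-per-edge argument you spell out. Your extra remarks on the input model and the $O(n+m)$ versus $O(m)$ distinction are fine but not needed.
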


Now, let $\mathcal{M}(G,w)$ the set of all the minimum weight perfect matchings of 
$\{G,w\}$ and $\mathcal{M}(G_{cs}(P))$ the set of all the perfect matchings of $G_{cs}(P)$. 
The following theorem is a key result of this paper, and gives a very important relation between this two sets.

\begin{Theorem}\label{correspondence}
If $P=(\pi,p)$ are optimal prices, then $\mathcal{M}(G,w)=\mathcal{M}(G_{cs}(P))$.
\end{Theorem}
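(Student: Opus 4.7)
The plan is to prove the two set inclusions separately, using Proposition~\ref{complem_slack_004} (complementary slackness) together with the dual feasibility of the optimal prices $P$. Both directions should come out essentially by unpacking definitions.

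For the forward inclusion $\mathcal{M}(G,w) \subseteq \mathcal{M}(G_{cs}(P))$, I would take $M \in \mathcal{M}(G,w)$, so $M$ is a minimum weight perfect matching of $\{G,w\}$. Since $P$ is assumed to be optimal, the pair $(M,P)$ satisfies the hypotheses of Proposition~\ref{complem_slack_004}, so $\pi(u)+p(v)=w(uv)$ for every $uv\in M$. By the Definition~\ref{E_min_def_001} of $E_{cs}(P)$, this gives $M\subseteq E_{cs}(P)$, and since $M$ is a perfect matching of $G$ whose edges all lie in $E_{cs}(P)$, it is a perfect matching of $G_{cs}(P)$.

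For the reverse inclusion $\mathcal{M}(G_{cs}(P))\subseteq \mathcal{M}(G,w)$, let $M$ be a perfect matching of $G_{cs}(P)$. Every edge $uv\in M$ lies in $E_{cs}(P)$, so $\pi(u)+p(v)=w(uv)$ on $M$. On the other hand, $P$ is an optimal (and therefore feasible) solution of the dual assignment program~\ref{dual_assig_problem_def_001}, so $\pi(u)+p(v)\le w(uv)$ holds for every $uv\in E$. These are exactly the two conditions in Proposition~\ref{complem_slack_004}, which then yields that $M$ is of minimum weight, i.e.\ $M\in \mathcal{M}(G,w)$.

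There is no real obstacle: the theorem is essentially a restatement of complementary slackness for the specific LP pair~\ref{line_assig_problem_def_001}--\ref{dual_assig_problem_def_001}, with $G_{cs}(P)$ playing the role of the set of edges on which the reduced cost vanishes. The only subtle point worth noting is that one uses optimality of $P$ in both directions, but differently: in the forward direction it supplies the tightness on $M$ via Proposition~\ref{complem_slack_004}, whereas in the reverse direction it is needed only through dual feasibility, so that Proposition~\ref{complem_slack_004} can then be applied to conclude optimality of $M$.
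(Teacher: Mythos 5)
Your proof is correct and takes essentially the same route as the paper: both arguments are direct applications of complementary slackness, the paper invoking the general Corollary~\ref{optimal_face_001} in a single biconditional chain, while you split into two inclusions and route through its assignment-problem specialization, Proposition~\ref{complem_slack_004}. Your handling of the reverse inclusion (using only dual feasibility of $P$ plus tightness on the edges of $E_{cs}(P)$) matches what the paper's citation of Corollary~\ref{optimal_face_001} implicitly does, so no gap remains.
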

\begin{proof}
Since $P$ are optimal prices, then by corollary~\ref{optimal_face_001}, a perfect matching $M$ of $G$ is of minimum 
weight if and only if $uv\notin M$ for all edge $uv$ such that $w(uv)-\pi(u)-p(v)\neq 0$, which happens if and only if $M$ is in $G_{cs}(P)$.
\end{proof}

Note that Theorem~\ref{correspondence} implies that $\mathcal{M}(G_{cs}(P))$ is the same for all optimal prices $P$.
Regardless of this fact, the following example shows that the subgraph $G_{cs}(P)$ can be different for different optimal prices.

\begin{Example}\label{ejemplo1}
Let $G$ be a bipartite graph with $U=\{u_0, u_1, u_2\}$ and $V=\{v_0, v_1, v_2\}$ as in figure~\ref{G_min_can_be_diferent_001}.
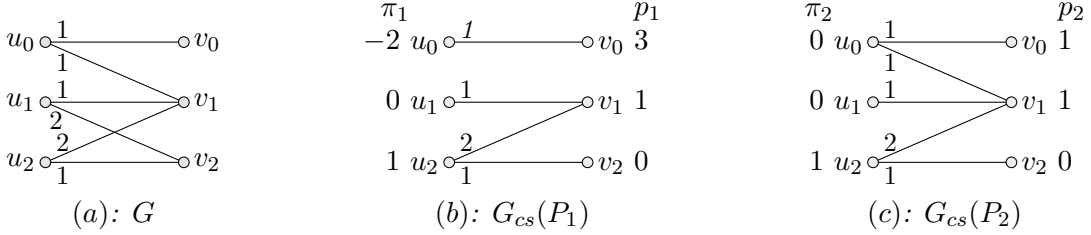
\begin{figure}[ht]
\begin{tabular}{c@{\extracolsep{18mm}}c@{\extracolsep{18mm}}c}
\begin{tikzpicture}[scale=0.8]
\begin{scope}[inner sep=0.5mm, color=black, line width=2pt,fill=black!10]
\draw[thin]	(0,0) -- node[very near start,above] {\small $1$} (2.3,0);
\draw[thin]	(0,0) -- node[very near start,below] {\small $1$} (2.3,-1);
\draw[thin]	(0,-1)	-- node[very near start,above] {\small $1$} (2.3, -1);
\draw[thin]	(0,-1)	-- node {} (2.3, -2);
\draw[thin]	(0,-2)	-- node[very near start,above] {\small $2$}(2.3, -1);
\draw (0.18, -1.3) node {\small $2$};
\draw[thin]	(0,-2)	-- node[very near start,below] {\small $1$} (2.3, -2);
\draw[thin] (0, 0) node [shape=circle,draw, fill, label=left:$u_0$] {};
\draw[thin] (2.3	, 0)	node [shape=circle,draw, fill, label=right:$v_0$] {};
\draw[thin] (0, -1) node [shape=circle,draw, fill, label=left:$u_1$] {};
\draw[thin] (2.3	, -1) node [shape=circle,draw, fill, label=right:$v_1$] {};
\draw[thin] (0, -2) node [shape=circle,draw, fill, label=left:$u_2$] {};
\draw[thin] (2.3	, -2) node [shape=circle,draw, fill, label=right:$v_2$] {};
\end{scope}
\end{tikzpicture}

&

\begin{tikzpicture}[scale=0.8]
\begin{scope}[inner sep=0.5mm, color=black, fill=\colorfill]
\draw[thin]	(0,0)				-- node[very near start,above] {\small 1} (2.3, 0);
\draw[thin]	(0,-1)	-- node[very near start,above] {\small $1$} (2.3, -1);
\draw[thin]	(0,-2)	-- node[very near start,above] {\small $2$} (2.3, -1);
\draw[thin]	(0,-2)	-- node[very near start,below] {\small $1$} (2.3, -2);
\draw (-0.9,0.5) node {$\pi_1$};
\draw (2.3+1, 0.5) node {$p_1$};
\draw[thin] (0, 0)	node [shape=circle,draw, fill, label=left:$-2\, \, u_0$] {};
\draw[thin] (2.3	, 0)	node [shape=circle,draw, fill, label=right:$v_0\, \, 3$] {};
\draw[thin] (0, -1) node [shape=circle,draw, fill, label=left:$0\, \, u_1$] {};
\draw[thin] (2.3	, -1)	node [shape=circle,draw, fill, label=right:$v_1\, \, 1$] {};
\draw[thin] (0, -2) node [shape=circle,draw, fill, label=left:$1\, \, u_2$] {};
\draw[thin] (2.3	, -2)	node [shape=circle,draw, fill, label=right:$v_2\, \, 0$] {};
\end{scope}
\end{tikzpicture}

& 
\begin{tikzpicture}[scale=0.8]
\begin{scope}[inner sep=0.5mm, color=black, fill=\colorfill]
\draw[thin]	(0,0) -- node[very near start,above] {\small $1$} (2.3, 0);
\draw[thin]	(0,0) -- node[very near start,below] {\small $1$} (2.3, -1);
\draw[thin]	(0,-1) -- node[very near start,above] {\small $1$} (2.3, -1);
\draw[thin]	(0,-2) -- node[very near start,above] {\small $2$} (2.3, -1);
\draw[thin]	(0,-2) -- node[very near start,below] {\small $1$} (2.3, -2);
\draw (-0.9,0.5) node {$\pi_2$};
\draw (2.3+1, 0.5) node {$p_2$};
\draw[thin] (0, 0) node [shape=circle,draw, fill, label=left:$0 \, \,  u_0$] {};
\draw[thin] (2.3	, 0)	node [shape=circle,draw, fill, label=right:$v_0\, \,  1$] {};
\draw[thin] (0, -1) node [shape=circle,draw, fill, label=left:$0\, \,  u_1$] {};
\draw[thin] (2.3	, -1) node [shape=circle,draw, fill, label=right:$v_1\, \,  1$] {};
\draw[thin] (0, -2) node [shape=circle,draw, fill, label=left:$1\, \,  u_2$] {};
\draw[thin] (2.3	, -2) node [shape=circle,draw, fill, label=right:$v_2\, \,  0$] {};
\end{scope}
\end{tikzpicture}
\\
$(a)$: $G$ & $(b)$: $G_{cs}(P_1)$ & $(c)$: $G_{cs}(P_2)$ \\
\end{tabular}
\caption{{\bf (a)} An integer weighted bipartite graph $\{G,w\}$.
{\bf (b)} Optimal prices $P_1$ and the subgraph $G_{cs}(P_1)$.
{\bf (c)} Optimal prices $P_2$ and the subgraph $G_{cs}(P_2)$.}
\label{G_min_can_be_diferent_001}
\end{figure}

If $P_1=(\pi_1,p_1)$, where $\pi_1$ and $p_1$ are given by the maps $\{u_0\mapsto -2,\, u_1\mapsto 0,\, u_2\mapsto 1\}$
and $\{v_0\mapsto 3,\, v_1\mapsto 1,\, v_2\mapsto 0\}$ as in figure~\ref{G_min_can_be_diferent_001}(b), 
then it is not difficult to check that 
\[
E_{cs}(P_1)=\{u_0v_0,\, u_1v_1,\, u_2v_1,\, u_2v_2\}.
\]
Also, if $P_2=(\pi_2,p_2)$ is given as in figure~\ref{G_min_can_be_diferent_001}(c), then is not difficult to check that 
\[
E_{cs}(P_2)=\{u_0v_0,\, u_0v_1,\, u_1v_1,\, u_2v_1,\, u_2v_2\}
\]
and therefore $G_{cs}(P_1)\neq G_{cs}(P_2)$.
\end{Example}

It is important to remark that corollary~\ref{optimal_face_001} can be applied to any linear program in standard form
to classify all its optimum primal solutions using one optimal dual solution.
For instance, if we consider the transportation problem on a bipartite graph $T$ with supplies, demands, capacities, and per-unit costs.
Using corollary~\ref{optimal_face_001} and one optimal dual solution, we can construct a subgraph $T_{cs}$ of $T$,
such that the set of all the optimal flows of the original instance is equal to the set of all the feasible flows on the subgraph $T_{cs}$
with the same supplies, demands, and capacities.
In this case there also are algorithms that find optimal dual solutions of the transportation problem. 
But in general, the difficult part is to find optimal dual solutions of a linear program in standard form.


\subsection{Constructing $G_{cs}$ from $\epsilon$-optimal prices}\label{sec5}

Several of the algorithms that solve the assignment problem, especially those based on cost scaling techniques, 
solve it by finding $\epsilon$-optimal solutions for a small enough $\epsilon>0$, 
that guarantees the optimality of the primal solution as Proposition~\ref{eps_CS_implies_neps_001} implies.
See for instance~\cite{auctionA_Bert1, gabo_tarj1, Gold_ken2}.

Since the subgraph $G_{cs}$ is constructed using optimal prices, in this section we provide a procedure
to transform $\epsilon$-optimal solutions $M$ and $P_{\epsilon}=(\pi_{\epsilon},p_{\epsilon})$ with $\epsilon\leq 1/(n+1)$ 
into optimal solutions $M$ and $P=(\pi,p)$ in linear $O(n)$ time.
Note that the matching remains the same since it is already optimum (Proposition \ref{eps_CS_implies_neps_001}).
This algorithm finds a value $t\in \{0,\ldots,n\}$ such that if we define
\[
p(v)=\lfloor p_{\epsilon}(v)+t/(n+1) \rfloor \text{ for all }v\in V \text{ and }
\pi(u)=w(uv)-p(v) \text{ for all } uv\in M,
\]
then $P=(\pi,p)$ is optimal.
As we will prove later, it turns out that $t$ is a good value if 
\[
t\neq \lceil (n+1)(\lceil p_{\epsilon}(v) \rceil - p_{\epsilon}(v)) \rceil \text{ }mod\text{ }(n+1) \text{ for all }v\in V.
\]
Since $|V|=n$ and there are $n+1$ possibilities for $t$, then there exists at least one good value for $t$.
Assuming that $\epsilon\leq 1/(n+1)$, the following algorithm shows the complete procedure.

\begin{Algorithm}
\mbox{}\\
Input: $\epsilon$-optimal solutions $M$ and $P_{\epsilon}=(\pi_{\epsilon}, p_{\epsilon})$.\\
Output: Optimal prices $P=(\pi, p)$.

\renewcommand{\labelenumi}{\fbox{\arabic{enumi}} }
\begin{enumerate}
\item {\bf Procedure} get\_optimal$(M, P_{\epsilon})$
\item $\qquad$good$(j)$=true for all $j\in\{0,\ldots,n\}$;
\item $\qquad$good$(\lceil (n+1)(\lceil p_{\epsilon}(v) \rceil - p_{\epsilon}(v)) \rceil \text{ }mod\text{ } (n+1))$=false for all $v\in V$;
\item $\qquad$get $t$ such that good$(t)$==true;
\item $\qquad$$p(v)=\lfloor p_{\epsilon}(v)+t/(n+1) \rfloor$ for all $v\in V$;
\item $\qquad$$\pi(u)=w(uv)-p(v)$ for all $uv\in M$;
\item $\qquad$return $P=(\pi, p)$;
\item {\bf end}
\end{enumerate}
\end{Algorithm}

It is not difficult to see that all the parts in the procedure get\_optimal runs in $O(n)$ time. 
This gives us an overall $O(n)$ time. 
Now, we prove that the procedure returns indeed optimal prices.

\begin{Proposition}\label{prin2}
Let $M$ and $P_{\epsilon}=(\pi_{\epsilon},p_{\epsilon})$ be $\epsilon$-optimal solutions with $\epsilon\leq 1/(n+1)$, 
then  the procedure get\_optimal will return optimal prices $P=(\pi,p)$.
\end{Proposition}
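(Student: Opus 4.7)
The plan is to verify the two optimality conditions of Proposition~\ref{complem_slack_004} for the pair $(\pi,p)$ produced by the procedure. The tight condition $\pi(u)+p(v)=w(uv)$ for $uv\in M$ is immediate from line~6: for each $u$ the perfect matching $M$ singles out a unique $v=M(u)$, and $\pi(u)$ is defined precisely as $w(uv)-p(v)$. Existence of the value $t$ chosen in line~4 is equally direct: line~3 declares at most $|V|=n$ values bad among $n+1$ candidates, so at least one good $t\in\{0,\dots,n\}$ remains. The substantive part is therefore the dual feasibility inequality $\pi(u)+p(v)\leq w(uv)$ for every $uv\in E$, and this is where the specific form of the bad set matters.

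The key structural fact I will establish is that for any good $t$ and every $v\in V$, the fractional part
\[
\beta(v):=\{p_{\epsilon}(v)+t/(n+1)\}
\]
satisfies $\beta(v)\geq 1/(n+1)$. Indeed, as $t$ runs through $\{0,1,\dots,n\}$ the values $\beta(v)$ form $n+1$ equally spaced points in $[0,1)$ with gap $1/(n+1)$, so exactly one $t$ places $\beta(v)$ in the slot $[0,1/(n+1))$. A short case analysis, handling $p_{\epsilon}(v)\in\mathbb{Z}$ separately and, when $p_{\epsilon}(v)\notin\mathbb{Z}$, distinguishing whether $\lceil(n+1)q_v\rceil$ equals $n+1$ or not (with $q_v:=\lceil p_{\epsilon}(v)\rceil-p_{\epsilon}(v)$), shows that this unique ``bad'' $t$ is precisely $\lceil(n+1)q_v\rceil\bmod(n+1)$, which is exactly what line~3 excludes.

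The dual feasibility inequality then follows by a short computation. Setting $v'=M(u)$ and using $\pi_{\epsilon}(u)=w(uv')-p_{\epsilon}(v')$ together with the identity $p(v)-p_{\epsilon}(v)=t/(n+1)-\beta(v)$ (and the analogous one for $v'$), lines~5--6 yield
\[
\pi(u)+p(v)=\pi_{\epsilon}(u)+p_{\epsilon}(v)+\beta(v')-\beta(v).
\]
Applying the $\epsilon$-optimality bound $\pi_{\epsilon}(u)+p_{\epsilon}(v)\leq w(uv)+\epsilon$ together with $\beta(v')<1$ and $\beta(v)\geq 1/(n+1)\geq\epsilon$ gives the strict real inequality $\pi(u)+p(v)<w(uv)+1$; since $\pi(u)$, $p(v)$ and $w(uv)$ are all integers, this forces $\pi(u)+p(v)\leq w(uv)$, as required. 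The main obstacle, though mild, is the bookkeeping that identifies the explicit formula excluded in line~3 with the unique $t$ that pushes $\beta(v)$ below $1/(n+1)$; once that identification is made, the rest is one line of algebra combined with the integrality of $w$.
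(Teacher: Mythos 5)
Your proof is correct and follows essentially the same route as the paper: your key fact that every good $t$ keeps the fractional part $\{p_{\epsilon}(v)+t/(n+1)\}$ at least $1/(n+1)$, with the bad residue identified as $\lceil (n+1)(\lceil p_{\epsilon}(v)\rceil-p_{\epsilon}(v))\rceil \bmod (n+1)$, is exactly the content of the paper's Claim~\ref{s_equal_s_1_001} (stated there as the equivalent floor identity $\lfloor p_{\epsilon}(v)+(t-1)/(n+1)\rfloor=\lfloor p_{\epsilon}(v)+t/(n+1)\rfloor$), and both arguments then verify the two conditions of Proposition~\ref{complem_slack_004}. The only cosmetic difference is that you conclude dual feasibility from a strict real inequality together with integrality of $w$, $\pi$ and $p$, whereas the paper absorbs the $\epsilon\leq 1/(n+1)$ slack through a chain of ceiling/floor manipulations; the underlying mechanism is the same.
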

\begin{proof}
First, we will prove that if $uv\in M$, then $w(uv)-p(v)\leq w(uz)-p(z)$ for all $z\in N(u)$.
If $z=v$, then the inequality is clear.
Thus we can assume that $z\in N(u)\setminus v$.
Since $M$ and $P_{\epsilon}$ are $\epsilon$-optimal, then $w(uv)-p_{\epsilon}(v)\leq w(vz)-p_{\epsilon}(z)+\epsilon$. 
Thus,
\begin{eqnarray*}
w(uv)\!-\!p(v)& =& w(uv)\!-\!\lfloor p_{\epsilon}(v)\!+\!t/(n\!+\!1) \rfloor=w(uv)\!+\!\lceil -p_{\epsilon}(w)\!-\!t/(n\!+\!1) \rceil\\
&=& \lceil w(uv)\!-\!p_{\epsilon}(v)\!-\!t/(n\!+\!1) \rceil \overset{\tiny \epsilon-optimality}{\leq} \lceil w(uz)\!-\!p_{\epsilon}(z)\!+\!\epsilon-t/(n\!+\!1) \rceil\\
&\overset{\tiny \epsilon\leq 1/(n\!+\!1)}{\leq}& \lceil w(uz)\!-\!p_{\epsilon}(z)\!+\!1/(n\!+\!1)\!-\!t/(n\!+\!1) \rceil= w(uz)\!+\!\lceil\!-p_{\epsilon}(z)\!-\!(t-1)/(n\!+\!1)\! \rceil\\
&=&w(uz)\!-\!\lfloor p_{\epsilon}(z)\!+\!(t\!-\!1)/(n\!+\!1) \rfloor.
\end{eqnarray*}

In order to follow with the proof we need the following technical result:

\begin{Claim}\label{s_equal_s_1_001}
Let $r\in \mathbb{R}$, $n\in \mathbb{Z}$ and $t\in \{0,\ldots,n\}$. 
If $t\neq \lceil (n+1)(\lceil r \rceil - r) \rceil \text{ }mod\text{ }(n+1)$, then $\lfloor r+(t-1)/(n+1) \rfloor=\lfloor r+t/(n+1) \rfloor$.
\end{Claim}
\begin{proof}
Let $R=\lceil r \rceil$ and $P_i=r+i/(n+1)$ for all $i\in \{0,\ldots,n+1\}$.
Is not difficult to see that $R\in (P_{i-1}, P_i]$ if and only if $i=\lceil (n+1)(R-r) \rceil \text{ }mod\text{ } (n+1)$ 
(see Figure~\ref{s_equal_s_1_002}).
Thus, if $t\neq\lceil (n+1)(R-r) \rceil \text{ }mod\text{ }(n+1)$, then $\lfloor P_{t-1} \rfloor= \lfloor P_{t} \rfloor=R-1$ 
or $\lfloor P_{t-1} \rfloor= \lfloor P_{t} \rfloor=R$.
That is, $\lfloor P_{t-1} \rfloor=\lfloor P_{t} \rfloor$.
\begin{figure}[ht]
\begin{center}
\begin{tikzpicture}[scale=1]
\begin{scope}[inner sep=1.2mm, color=black, fill=\colorfill]

\draw[step=2,style=help lines]		(0,-\heightii/2) grid (\widthii-4, \heightii/2);
\draw[\thinedge]	(\widthii-1.7, 0)		node {$\mathbb{R}$};

\draw[\thinedge,<->]	(-2,0)	--	(\widthii-2	, 0);

\draw[\thickedge]	(0, -\heightii/2)	--	(0, \heightii/2);
\draw[\thickedge]	(10, -\heightii/2)	--	(10, \heightii/2);
\draw[\thickedge]	(4, -\heightii/2)	--	(4, \heightii/2);
\draw[\thickedge]	(6, -\heightii/2)	--	(6, \heightii/2);

\draw[\thickedge]	(5, -\heightii*0.7)	--	(5, \heightii*0.7);

\draw[\thinedge]	(0, -\heightii)		node {$r$};
\draw[\thinedge]	(4, -\heightii)		node {$r+\frac{t-1}{n+1}$};
\draw[\thinedge]	(6, -\heightii)		node {$r+\frac{t}{n+1}$};
\draw[\thinedge]	(10, -\heightii)	node {$r+1$};
\draw[\thinedge]	(6.6, \heightii)		node[auto] {$R=r+\frac{(n+1)(R - r)}{n+1}$};

\draw[\thickedge,draw=white]	(2.6, 0)	--	(3.3, 0);
\draw[\thinedge]	(3, 0)	node {$\cdots$};
\draw[\thickedge,draw=white]	(6.6, 0)	--	(7.3, 0);
\draw[\thinedge]	(7, 0)	node {$\cdots$};
\end{scope}
\end{tikzpicture}
\end{center}
\caption{The equipartition of the interval $[r,r+1]$ defined by $P_i$.}
\label{s_equal_s_1_002}
\end{figure}
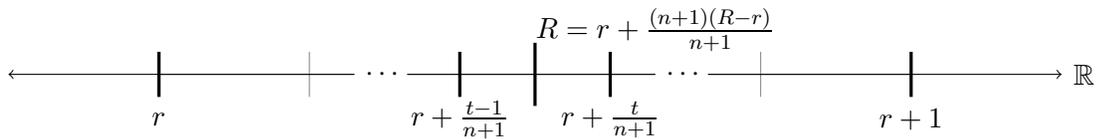
\end{proof}
The authors thank Lyle Ramshaw for having pointed out this identity.

Now, let $t\neq \lceil (n+1)(\lceil p_{\epsilon}(z) \rceil - p_{\epsilon}(z)) \rceil \text{ }mod\text{ }(n+1)$ for all $z\in V$.
Then, by Claim~\ref{s_equal_s_1_001}
\[
w(uv)-p(v)\leq w(uz)-\lfloor p_{\epsilon}(z)+(t-1)/(n+1) \rfloor\overset{\ref{s_equal_s_1_001}}{=} w(uz)-\lfloor p_{\epsilon}(z)+t/(n+1) \rfloor=w(uz)-p(z).
\]
Moreover, if we define $\pi(u) = w(uv)-p(v)$ for each $uv\in M$, then  
$\pi(u)+p(z)\leq w(uz)$ for all $z\in N(u)$, with equality when $z=v$.
Therefore, from Proposition~\ref{complem_slack_004} we get that $M$ and $P$ are optimal solutions.
\end{proof}


\section{Some Applications of the subgraph $G_{cs}$}\label{applications}

In this section we apply the results of section~\ref{FGopt} to solve the weighted variants of some problems involving unweighted perfect matchings in bipartite graphs.
First, we solve the problem of finding all the edges that occur in some minimum weight perfect matching.
Second we solve the problem of enumerating all the minimum weight perfect matchings of a weighted bipartite graph.
The unweighted version of this two problems are addressed in \cite{tassa} and \cite{Takeaki2} respectively.
Finally, we show how we can use the subgraph $G_{cs}$ to solve efficiently an interesting problem which we call the preallocation problem. 
This problem consists of finding a minimum weight perfect matching which contains the maximum number of edges of a prescribed subset of edges.

\subsection{Finding all the edges that occur in some minimum weight perfect matching}
Given a weighted bipartite graph $\{G,w\}$ we want to find the subgraph $G_{opt}=(U\sqcup V, E_{opt})$ on the same vertex than $G$ and edge set given by:
\begin{equation}\label{E_opt_def}
E_{opt}=\{ e\in M\;:\; M \text{ is a minimum weight perfect matching} \}.
\end{equation}

Note that $E_{opt}$ is equal to the union of all the minimum weight perfect matchings.
The unweighted version has been addressed by several authors, see for instance~\cite{tassa, costa, regin}.

Given optimal prices $P=(\pi,p)$, Theorem~\ref{correspondence} gives us a way to compute the subgraph $G_{opt}$. 
First, we construct $G_{cs}(P)$ and then we obtain $E_{opt}$ by finding all the edges that belong to at least one perfect matching of $G_{cs}(P)$. 
This can be done using the algorithm given in~\cite{tassa}.
The following proposition summarizes one of the main results of~\cite{tassa}. 

\begin{Proposition}{\cite{tassa}}\label{tassa_prop}
Given a feasible bipartite graph $G=(U\sqcup V, E)$. 
All the edges that occur in some perfect matching of $G$ can be found in $O(m)$ time.
\end{Proposition}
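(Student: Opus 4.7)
The plan is to reduce the problem to a strongly connected components computation on an auxiliary digraph. By feasibility, a perfect matching $M$ of $G$ exists; I assume that one is already at hand, so the $O(m)$ bound covers only the reduction itself and not the initial matching computation. First, I build a digraph $D$ on the vertex set $U \sqcup V$ by orienting each edge $uv \in M$ from $u$ to $v$ and each edge $uv \in E \setminus M$ from $v$ to $u$. This requires a single sweep through $E$ and takes $O(m)$ time.

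The central claim is that an edge $uv \in E$ lies in some perfect matching of $G$ if and only if either $uv \in M$, or $u$ and $v$ belong to the same strongly connected component of $D$. For $uv \in M$ there is nothing to prove. For $uv \in E \setminus M$, the equivalence rests on a correspondence between $M$-alternating cycles in $G$ through $uv$ and directed cycles in $D$ through the arc $v \to u$. The bipartite structure is what makes this correspondence clean: every cycle in $G$ alternates between $U$ and $V$, and the orientation convention sends matching edges one way across the bipartition and non-matching edges the other way, so an alternating cycle is automatically oriented consistently in $D$, and conversely any directed cycle in $D$ alternates between $M$-edges and non-$M$-edges.

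Granting this correspondence, the ``only if'' direction follows by considering $M \triangle M'$ for any perfect matching $M'$ containing $uv$: this symmetric difference decomposes into vertex-disjoint even alternating cycles, one of which passes through $uv$, and the corresponding directed cycle in $D$ witnesses that $u$ and $v$ share an SCC. The ``if'' direction follows because a directed path from $u$ to $v$ in $D$, concatenated with the arc $v \to u$, forms a directed cycle through $uv$; translating back to $G$ yields an $M$-alternating cycle through $uv$, and toggling $M$ along this cycle produces a perfect matching containing $uv$.

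The algorithm is then immediate: run Tarjan's SCC routine on $D$ in $O(n + m) = O(m)$ time (using $m \geq n/2$, which holds because $G$ has a perfect matching), label every vertex with its SCC, and perform one final sweep over $E$ marking an edge as belonging to $E_{opt}$ whenever it lies in $M$ or its endpoints share an SCC label. The main obstacle is establishing the bijection between $M$-alternating cycles in $G$ and directed cycles in $D$ with enough care to cover edges of both types in both directions; once this is in place, the complexity analysis is routine.
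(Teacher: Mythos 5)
Your argument is correct, but note that the paper does not prove this proposition at all: it is quoted verbatim as an external result of Tassa~\cite{tassa}, and the paper simply invokes \cite[Algorithm 2]{tassa} as a black box. What you have written is the classical self-contained proof of the perfect-matching case: orient matching edges $U\to V$ and non-matching edges $V\to U$, observe that directed cycles in this digraph are exactly $M$-alternating cycles (since the only arc leaving a $U$-vertex and the only arc entering a $V$-vertex is its matching arc), and conclude via the symmetric-difference decomposition that a non-matching edge is allowed if and only if its endpoints share a strongly connected component; Tarjan's algorithm then gives $O(n+m)=O(m)$. This is essentially the technique behind R\'egin's filtering algorithm~\cite{regin}, and it is also the core of Tassa's method; Tassa's actual contribution is the more general setting of maximum matchings that need not be perfect, where one must additionally handle alternating paths from exposed vertices, which your argument does not cover but also does not need here since $G$ is assumed feasible. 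One caveat you handled correctly and which is worth keeping explicit: the $O(m)$ bound only makes sense if a perfect matching is already available (as in Tassa's statement, where the bound is conditional on a maximum matching being given), since computing one from scratch costs more than $O(m)$; your assumption that $M$ is at hand is therefore the right reading of the proposition, even though the paper's phrasing ``given a feasible bipartite graph'' glosses over it.
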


Assuming that the bipartite graph is feasible, the following algorithm finds its subgraph $G_{opt}$.
\begin{Algorithm}\label{g_opt_alg}
\mbox{}\\
Input: An integer weighted bipartite graph $\{G,w\}$ and optimal dual prices $P$.\\
Output: The subgraph $G_{opt}$ of $G$.

\renewcommand{\labelenumi}{\fbox{\arabic{enumi}} }
\begin{enumerate}
\item {\bf Procedure} get\_Gopt$(G,w,P)$
\item $\qquad$ Construct the subgraph $G_{cs}(P)=(U\sqcup V, E_{cs})$;
\item $\qquad$ Using  \cite[Algorithm 2]{tassa} get $E_{opt}=\{e\in M \, | \, M \text{ is a perfect matching of } G_{cs}(P) \}$;
\item $\qquad$ Return $G_{opt}=(U\sqcup V, E_{opt})$;
\item {\bf end}
\end{enumerate}
\end{Algorithm}
\begin{Proposition}
Given a weighted bipartite graph $\{G,w\}$, the procedure \textit{get\_Gopt} returns $G_{opt}$. 
\end{Proposition}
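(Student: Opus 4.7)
The plan is to check that the edge set produced by \textit{get\_Gopt} coincides with the $E_{opt}$ defined in~(\ref{E_opt_def}). The whole argument hinges on Theorem~\ref{correspondence}, which equates the set of minimum weight perfect matchings of $\{G,w\}$ with the set of perfect matchings of $G_{cs}(P)$ whenever $P$ consists of optimal dual prices. So the weighted problem on $\{G,w\}$ reduces to the unweighted variant on $G_{cs}(P)$, and this is precisely the reduction that Algorithm~\ref{g_opt_alg} implements.

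First I would spell out the edge-set identity. By definition
\[
E_{opt}=\bigcup_{M\in\mathcal{M}(G,w)} M,
\]
and Theorem~\ref{correspondence} rewrites this as
\[
E_{opt}=\bigcup_{M\in\mathcal{M}(G_{cs}(P))} M,
\]
which is exactly the set of edges of $G_{cs}(P)$ that lie in at least one perfect matching of $G_{cs}(P)$. This is the object that step~\fbox{3} of the procedure is asked to compute.

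Next I would invoke Proposition~\ref{tassa_prop}: applied to any feasible bipartite graph, the algorithm of~\cite{tassa} returns exactly the set of edges appearing in some perfect matching. Feasibility of $G_{cs}(P)$ is automatic under the standing assumption that $G$ is feasible, since by Theorem~\ref{correspondence} every minimum weight perfect matching of $G$ (at least one exists) is a perfect matching of $G_{cs}(P)$. Combining this with the identity above yields that the edge set returned in step~\fbox{3} is precisely $E_{opt}$, and therefore step~\fbox{4} returns $G_{opt}=(U\sqcup V,E_{opt})$.

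I do not expect any real obstacle: the statement is a direct corollary of Theorem~\ref{correspondence} and Proposition~\ref{tassa_prop}, with the only subtlety being the observation that feasibility of $G$ transfers to $G_{cs}(P)$, which I would dispatch in one line as above.
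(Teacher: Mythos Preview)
Your proof is correct and follows essentially the same route as the paper: both arguments identify $E_{opt}$ with $\bigcup_{M\in\mathcal{M}(G,w)} M$, invoke Theorem~\ref{correspondence} to rewrite this as $\bigcup_{M\in\mathcal{M}(G_{cs}(P))} M$, and then observe that this is precisely what step~\fbox{3} computes. Your explicit verification that $G_{cs}(P)$ is feasible (so that Proposition~\ref{tassa_prop} applies) is a small detail the paper leaves implicit, but otherwise the two proofs coincide.
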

\begin{proof}
Note that that the edge set returned by the procedure \textit{get\_Gopt} is equal to $\bigcup\; \{M\;:\; M\in \mathcal{M}(G_{cs}(P)) \}$.
But, by Theorem~\ref{correspondence}, $\mathcal{M}(G_{cs}(P))=\mathcal{M}(G,w)$. 
Then procedure \textit{get\_Gopt} returns the edge set $\bigcup\; \{M\;:\; M\in \mathcal{M}(G,w) \}$, which is equivalent to $E_{opt}$.
\end{proof}

\begin{Theorem}
Given a weighted bipartite graph $\{G,w\}$ and optimal dual prices $P$, all the edges of $G$ that occur in some
minimum weight perfect matching can be found in $O(m)$ time.
\end{Theorem}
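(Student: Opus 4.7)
The plan is to invoke Algorithm~\ref{g_opt_alg} and bound the running time of its two substantive steps, since correctness of the output has already been established in the preceding proposition. First I would handle line 2 of procedure \emph{get\_Gopt}: by Proposition~\ref{optimum_iif_in_G_min_001}, building the subgraph $G_{cs}(P)=(U\sqcup V, E_{cs})$ amounts to a single linear scan of $E$ in which each edge $uv$ is retained precisely when $\pi(u)+p(v)=w(uv)$. This costs $O(m)$ time and yields a subgraph with at most $m$ edges.

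Next I would handle line 3 by appealing to Proposition~\ref{tassa_prop}, which returns in linear time all edges occurring in some perfect matching of a \emph{feasible} bipartite graph. The hypothesis of the present theorem tacitly requires that $\{G,w\}$ admit at least one minimum weight perfect matching (otherwise $E_{opt}$ is empty and there is nothing to compute). Applying Theorem~\ref{correspondence}, the equality $\mathcal{M}(G,w)=\mathcal{M}(G_{cs}(P))$ guarantees that $G_{cs}(P)$ itself has a perfect matching, so it is feasible and Proposition~\ref{tassa_prop} applies. Since $|E_{cs}|\leq m$, line 3 runs in $O(m)$ time as well.

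Adding the two contributions gives a total running time of $O(m)+O(m)=O(m)$, and the output is $E_{opt}$ by the preceding proposition. The only genuinely delicate point in the argument is verifying the feasibility hypothesis needed to invoke Proposition~\ref{tassa_prop} on $G_{cs}(P)$; this is the main obstacle in spirit, but Theorem~\ref{correspondence} disposes of it immediately by transferring feasibility from $\{G,w\}$ to $G_{cs}(P)$. Everything else is bookkeeping of linear-time subroutines.
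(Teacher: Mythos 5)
Your argument is correct and follows the paper's own proof: bound line 2 of Algorithm~\ref{g_opt_alg} by Proposition~\ref{optimum_iif_in_G_min_001} and line 3 by Proposition~\ref{tassa_prop}, giving $O(m)$ overall. Your extra remark on transferring feasibility to $G_{cs}(P)$ via Theorem~\ref{correspondence} is a welcome clarification of a hypothesis the paper handles only implicitly (``Assuming that the bipartite graph is feasible''), but it does not change the approach.
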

\begin{proof}
By Propositions~\ref{optimum_iif_in_G_min_001} and~\ref{tassa_prop}, $G_{cs}(P)$ can be obtained 
in $O(m)$ time and $E_{opt}$ can be constructed in $O(|E_{cs}(P)|)$ time.
Therefore algorithm \ref{g_opt_alg} works on $O(m)$ overall  time.
\end{proof}


\subsection{Enumerating all the minimum weight perfect matchings}\label{enumerating}

In this section we give an algorithm to enumerate all the minimum weight perfect matchings of $\{G,w\}$.
Given $G_{cs}(P)$ for some optimal dual prices $P$ we can enumerate all the perfect 
matchings of $G_{cs}(P)$ using some of the algorithms contained in~\cite{Takeaki2,FukudaMatsui}.
For instance, in~\cite{Takeaki2} we find the following result:
\begin{Theorem}{\cite[Theorem 1]{Takeaki2}.}\label{takeaki_theorem}
Perfect matchings in a bipartite graph $G=(V, E)$ can be enumerated in $O(|E|\sqrt{|V|})$ preprocessing time and
$O(\log |V|)$ time per perfect matching.
\end{Theorem}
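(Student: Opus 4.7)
The plan follows the classical enumeration paradigm due to Uno. First, I would use the Hopcroft--Karp algorithm to find a single perfect matching $M_0$ of $G$ in $O(|E|\sqrt{|V|})$ time, which accounts for the preprocessing bound. The key structural observation is that if $M$ and $M'$ are any two perfect matchings of $G$, then $M\oplus M'$ is a vertex-disjoint union of alternating cycles with respect to $M$. Orienting the edges of $G$ so that edges of $M$ point from $V$ to $U$ and all other edges point from $U$ to $V$ produces a directed auxiliary graph $D(G,M)$, and the perfect matchings of $G$ correspond bijectively to collections of vertex-disjoint directed cycles of $D(G,M)$: a matching $M'$ is recovered from its associated edge set $C$ as $M\oplus C$.

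Using this bijection, the enumeration proceeds by binary partition. At each recursive node we have a current matching $M$ and a trimmed subgraph $G'$ consisting exactly of those edges of $G$ that occur in some perfect matching of the current instance; this subgraph can be read off from the strongly connected components of $D(G',M)$. I would pick an edge $e$ lying on some alternating cycle of $D(G',M)$ and split the enumeration into two subproblems: the perfect matchings containing $e$, obtained by contracting $e$, and those avoiding $e$, obtained by deleting $e$. The recursion terminates when only one perfect matching remains, which is then output. Because the trimmed graph guarantees that both sides of the split contain at least one matching, the recursion tree has exactly one leaf per perfect matching and at most $2|\mathcal{M}(G)|-1$ nodes in total.

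To reach the $O(\log |V|)$ per-matching bound, the trimmed subgraph $G'$ must be maintained incrementally after each contraction or deletion rather than recomputed from scratch. Exploiting the strongly-connected-component structure of $D(G',M)$, the edges that leave the trimmed graph when $e$ is fixed or removed are confined to a local region that can be explored in time proportional to the number of such edges; each deleted edge is charged against the matchings whose exclusion it certifies in the remaining recursion. Combined with the fact that every recursive node outputs at least one matching on both of its branches, this charging scheme yields an amortized $O(\log |V|)$ cost per matching.

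The principal obstacle lies precisely in the amortized analysis: one must show that the total work of updating strongly connected components and locating new alternating cycles at the internal nodes telescopes to $O(\log |V|)$ per output rather than to $O(|E|)$ per node. This requires choosing the pivot edge $e$ with care—so that both recursive subproblems contain comparable numbers of matchings, keeping the recursion tree balanced—and wrapping the incremental trimming procedure around a data structure that supports logarithmic-time updates to the underlying alternating-cycle decomposition. Once this amortized bookkeeping is in place, the claimed bounds follow.
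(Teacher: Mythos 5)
This statement is not proved in the paper at all: it is imported verbatim as Theorem~1 of Uno~\cite{Takeaki2}, and the paper's ``proof'' is the citation. So the only question is whether your sketch actually establishes Uno's bound, and it does not. Your outline does follow Uno's framework faithfully up to a point --- Hopcroft--Karp for the $O(|E|\sqrt{|V|})$ preprocessing, the directed graph $D(G,M)$ whose vertex-disjoint cycle collections are in bijection with perfect matchings, binary partition on a pivot edge, and trimming to the maximally-matchable edges via strongly connected components. But everything up to there yields, with a naive per-node recomputation, only $O(|E|)$ time per matching (essentially Uno's earlier 1997 result); the entire content of the cited theorem is the reduction of the per-matching cost to $O(\log|V|)$, and that is exactly the step you leave open, acknowledging it as ``the principal obstacle'' and deferring it to unspecified bookkeeping.

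Moreover, the concrete device you propose for closing the gap does not work. You suggest choosing the pivot edge $e$ ``so that both recursive subproblems contain comparable numbers of matchings, keeping the recursion tree balanced.'' There is no efficient way to do this: counting (or even approximately comparing) the numbers of perfect matchings containing versus avoiding a given edge is a \#P-hard counting problem, and no such balance criterion is available to the algorithm. It is also not what Uno's ``trimming and balancing'' technique does: his balancing does not equalize the number of outputs on the two sides, but restructures the recursion and the trimming procedure so that the time spent at an internal node can be bounded by (a polylogarithmic factor times) the number of descendants it spawns, after which the total work telescopes to $O(\log|V|)$ amortized per output matching. Your charging scheme (``each deleted edge is charged against the matchings whose exclusion it certifies'') is stated too vaguely to substitute for this analysis, and no data structure supporting ``logarithmic-time updates to the alternating-cycle decomposition'' is exhibited. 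As written, the proposal proves only the weaker $O(|E|)$-per-matching bound; the $O(\log|V|)$ claim remains unjustified and must be taken, as the paper takes it, from \cite{Takeaki2}.
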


Assuming that the bipartite graph is feasible, the following algorithm 
describes how to enumerate all the minimum weight perfect matchings.
\begin{Algorithm}\mbox{}\\\label{enum_mwpm_alg}
Input: An integer weighted bipartite graph $\{G,w\}$ and optimal dual prices $P$.\\
Output: The enumeration of all the minimum weight perfect matchings of $\{G,w\}$.

\renewcommand{\labelenumi}{\fbox{\arabic{enumi}} }
\begin{enumerate}
	\item {\bf Procedure} enumerate\_MW\_Per\_Mat$(G,w,P)$
	\item $\qquad$Construct the subgraph $G_{cs}(P)=(U\sqcup V, E_{cs})$;
	\item $\qquad$Using \cite[Algorithm in page 369]{Takeaki2} enumerate all the perfect matchings of $G_{cs}(P)$;
	\item $\qquad$return;
	\item {\bf end}
\end{enumerate}
\end{Algorithm}
\begin{Proposition}
The procedure enumerate\_MW\_Per\_Mat enumerates all the minimum weight perfect matchings of a weighted bipartite graph $\{G,w\}$.
\end{Proposition}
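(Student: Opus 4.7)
The proof is essentially an immediate corollary of Theorem~\ref{correspondence} combined with correctness of the enumeration algorithm cited from~\cite{Takeaki2}. The plan is to verify that each step of the procedure preserves the intended invariant, namely that the perfect matchings being enumerated are precisely the minimum weight perfect matchings of $\{G,w\}$.

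First, I would note that Step~2 of the procedure constructs $G_{cs}(P)$ from the given optimal prices $P$; this is well-defined by Definition~\ref{E_min_def_001} and valid since $P$ is given as optimal. Second, by Theorem~\ref{correspondence}, we have the identity $\mathcal{M}(G,w) = \mathcal{M}(G_{cs}(P))$, so any enumeration of elements of $\mathcal{M}(G_{cs}(P))$ is automatically an enumeration of elements of $\mathcal{M}(G,w)$, with no repetitions and no omissions. Third, Step~3 applies the algorithm of~\cite[Algorithm in page 369]{Takeaki2}, which by Theorem~\ref{takeaki_theorem} correctly enumerates all perfect matchings of the bipartite graph $G_{cs}(P)$ it is given as input.

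Combining these three observations yields the conclusion: the output of the procedure is precisely the set $\mathcal{M}(G_{cs}(P)) = \mathcal{M}(G,w)$, which is what we wanted to enumerate. There is no substantive obstacle in this proof; the only point worth stressing is that the translation from the weighted enumeration problem on $\{G,w\}$ to the unweighted enumeration problem on $G_{cs}(P)$ is rigorously justified by Theorem~\ref{correspondence}, and this is exactly the reduction that the construction of $G_{cs}$ was designed to support.
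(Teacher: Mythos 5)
Your proof is correct and follows exactly the paper's argument: the statement reduces to the identity $\mathcal{M}(G,w)=\mathcal{M}(G_{cs}(P))$ from Theorem~\ref{correspondence}, together with the correctness of the enumeration algorithm of~\cite{Takeaki2} applied to $G_{cs}(P)$. Your write-up just makes the paper's one-line justification slightly more explicit.
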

\begin{proof}
It follows because by Theorem~\ref{correspondence}, $\mathcal{M}(G_{cs}(P))=\mathcal{M}(G,w)$.
\end{proof}

\begin{Theorem}
Given a weighted bipartite graph $\{G,w\}$ and optimal dual prices $P$, all its minimum weight perfect matchings
can be enumerated in $O(m+|E_{cs}(P)|\sqrt{n}+|\mathcal{M}(G,w)|\log\,n)$ time.
\end{Theorem}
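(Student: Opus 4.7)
The plan is to verify that Algorithm~\ref{enum_mwpm_alg} runs within the claimed time bound by adding up the cost of its three steps, and then to use Theorem~\ref{correspondence} to identify the number of matchings produced with $|\mathcal{M}(G,w)|$.

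First, I would account for the cost of building the auxiliary subgraph. By Proposition~\ref{optimum_iif_in_G_min_001}, given $\{G,w\}$ and optimal dual prices $P$, the subgraph $G_{cs}(P)=(U\sqcup V, E_{cs}(P))$ can be constructed in $O(m)$ time, since it only requires scanning each edge $uv\in E$ once and checking whether $\pi(u)+p(v)=w(uv)$.

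Second, I would invoke the enumeration subroutine of Uno (Theorem~\ref{takeaki_theorem}) on the bipartite graph $G_{cs}(P)$, which has $|U\sqcup V|\leq 2n$ vertices and $|E_{cs}(P)|$ edges. This contributes $O(|E_{cs}(P)|\sqrt{n})$ preprocessing time plus $O(\log n)$ time per perfect matching output. Combining the two parts, the total time spent enumerating perfect matchings of $G_{cs}(P)$ is
\[
O\bigl(|E_{cs}(P)|\sqrt{n}+|\mathcal{M}(G_{cs}(P))|\log n\bigr).
\]

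Third, I would apply Theorem~\ref{correspondence}, which asserts $\mathcal{M}(G_{cs}(P))=\mathcal{M}(G,w)$ whenever $P$ is optimal. Substituting $|\mathcal{M}(G_{cs}(P))|=|\mathcal{M}(G,w)|$ and adding the $O(m)$ cost of the first step yields the stated overall bound. There is no real obstacle here: the result is a direct bookkeeping consequence of Proposition~\ref{optimum_iif_in_G_min_001}, Theorem~\ref{takeaki_theorem}, and Theorem~\ref{correspondence}, with the only subtlety being the observation that the matchings produced by the enumeration algorithm on $G_{cs}(P)$ are in bijection with the minimum weight perfect matchings of $\{G,w\}$, so each one counted in the $\log n$ per-matching term is indeed an element of $\mathcal{M}(G,w)$.
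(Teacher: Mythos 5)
Your proposal is correct and follows essentially the same route as the paper's proof: construct $G_{cs}(P)$ in $O(m)$ time via Proposition~\ref{optimum_iif_in_G_min_001}, apply Theorem~\ref{takeaki_theorem} to enumerate its perfect matchings, and use Theorem~\ref{correspondence} to identify $|\mathcal{M}(G_{cs}(P))|$ with $|\mathcal{M}(G,w)|$. Your version is if anything slightly more explicit about the last substitution, which the paper's proof performs implicitly.
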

\begin{proof}
By Proposition~\ref{optimum_iif_in_G_min_001}, the subgraph $G_{cs}(P)$ can be obtained in $O(m)$ time.
Also, by Theorem~\ref{takeaki_theorem}, we can enumerate all the perfect matchings of $G_{cs}(P)$ in $O(|E_{cs}(P)|\sqrt{n}+\mathcal{M}(G,w)\log\,n)$ time.
Therefore, Algorithm~\ref{enum_mwpm_alg} runs in $O(m+|E_{cs}(P)|\sqrt{n}+|\mathcal{M}(G,w)|\log\,n)$ time.
\end{proof}

Note that $G_{cs}(P)$ can have a lot less edges than $G$ and therefore
$G$ can have a very small number of minimum weight perfect matchings in such a way that 
$O(m+|E_{cs}(P)|\sqrt{n}+\mathcal{M}(G,w)\log\,n)=O(m)$.


\subsection{The preallocation problem}\label{preallocation}

The {\it preallocation problem} can be stated as follows:
Given a weighted bipartite graph $\{G,w\}$ and a subset of edges $E_p\subseteq E(G)$,
we want to find a minimum weight perfect matching $M$ of $\{G,w\}$ such that $|M\cap E_p|$ is maximum.
In other words, there is no other minimum weight perfect matching of $\{G,w\}$ that has more edges of $E_p$ than $M$.
We can see the set $E_p$ as a set of preferences.
Note that there can be several minimum weight perfect matchings that respect a maximum number of preferences.

This problem can be easily solved with the help of the subgraph $G_{cs}$.
Given the subgraph $G_{cs}(P)$ for some optimal dual prices $P$, we define a weight function $w_p:E(G_{cs}(P))\rightarrow \{0,1\}$
over the edges of $G_{cs}(P)$, as follows:
\[w_p(e)=
\begin{cases}
0 & \text{ if } e\in E_p, \\
1 & \text{ if } e\notin E_p.
\end{cases}
\]
The algorithm can be derived from the following proposition.
\begin{Proposition}\label{pre_prop_01}
If $M$ is a minimum weight perfect matching of $\{G_{cs}(P), w_p\}$, then $M$ is a minimum weight perfect matching
of $\{G,w\}$ such that $|M\cap E_p|$ is maximum. 
\end{Proposition}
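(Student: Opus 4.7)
The plan is to combine Theorem~\ref{correspondence} with a simple observation: the weight function $w_p$ was defined precisely so that minimizing $w_p$ over a set of perfect matchings is equivalent to maximizing the intersection with $E_p$. Let $M$ be a minimum weight perfect matching of $\{G_{cs}(P), w_p\}$.

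First, I would argue that $M$ is a minimum weight perfect matching of $\{G,w\}$. Since $M$ is a perfect matching of $G_{cs}(P)$, Theorem~\ref{correspondence} directly gives $M\in \mathcal{M}(G_{cs}(P))=\mathcal{M}(G,w)$.

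Next, I would verify the maximality of $|M\cap E_p|$. For any perfect matching $N$ of $G_{cs}(P)$, the definition of $w_p$ gives
\[
w_p(N)=\sum_{e\in N}w_p(e)=|N\setminus E_p|=|N|-|N\cap E_p|=n-|N\cap E_p|,
\]
so minimizing $w_p(N)$ over $N\in \mathcal{M}(G_{cs}(P))$ is equivalent to maximizing $|N\cap E_p|$ over the same set. Hence $M$ maximizes $|N\cap E_p|$ over all $N\in \mathcal{M}(G_{cs}(P))$. Applying Theorem~\ref{correspondence} once more to identify $\mathcal{M}(G_{cs}(P))$ with the set of minimum weight perfect matchings of $\{G,w\}$ then completes the proof.

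There is no real obstacle here: the only thing to be careful about is keeping the two distinct optimization problems straight (minimizing $w$ over all perfect matchings of $G$, versus minimizing $w_p$ over all perfect matchings of $G_{cs}(P)$) and using Theorem~\ref{correspondence} to bridge them. The whole argument is essentially a one-line counting identity plus two invocations of Theorem~\ref{correspondence}.
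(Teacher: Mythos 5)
Your proposal is correct and follows essentially the same route as the paper: Theorem~\ref{correspondence} identifies $\mathcal{M}(G_{cs}(P))$ with $\mathcal{M}(G,w)$, and the definition of $w_p$ makes minimizing $w_p$ over perfect matchings of $G_{cs}(P)$ equivalent to maximizing $|M\cap E_p|$. The only cosmetic difference is that you make this equivalence explicit via the identity $w_p(N)=n-|N\cap E_p|$, whereas the paper phrases the same point as a short contradiction argument.
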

\begin{proof}
Since $P$ are optimal dual prices, then from Theorem~\ref{correspondence} follows that $M$ is a minimum weight perfect matching of $\{G,w\}$. 
Also, since $\mathcal{M}(G_{cs}(P))=\mathcal{M}(G,w)$, then $M$ maximizes $|M\cap E_p|$ over the elements of $\mathcal{M}(G,w)$
if and only if it maximizes $|M\cap E_p|$ over the elements of $\mathcal{M}(G_{cs}(P))$.

Now, assume that $M'$ is a perfect matching of $G_{cs}(P)$ containing more edges of $E_p$ than $M$.
Since $M'$ has more edges with zero weight than $M$, then $w(M')< w(M)$ on $\{G_{cs}(P), w_p\}$,
a contradiction to the optimality of $M$. 
Therefore $M$ maximizes $|M\cap E_p|$ over the elements of $\mathcal{M}(G_{cs}(P))$.
\end{proof}

Assuming that the bipartite graph is feasible, the following algorithm shows how to solve the prea-llocation problem.
\begin{Algorithm}\mbox{}\\ \label{preall_alg}
Input: An integer weighted bipartite graph $\{G,w\}$, optimal dual prices $P$ and a subset of edges $E_p$.\\
Output: A minimum weight perfect matching that contains a maximum number of edges of $E_p$.

\renewcommand{\labelenumi}{\fbox{\arabic{enumi}} }
\begin{enumerate}
	\item {\bf Procedure} preallocation$(G,w,P,E_p)$
	\item $\qquad$Construct the subgraph $G_{cs}(P)=(U\sqcup V, E_{cs})$;
	\item $\qquad$Construct the weight function $w_p$, as given above;
	\item $\qquad$Get a minimum weight perfect matching $M$ of $\{G_{cs}(P), w_p\}$;
	\item $\qquad$return $M$;
	\item {\bf end}
\end{enumerate}
\end{Algorithm}

\begin{Proposition}
Algorithm~\ref{preall_alg} returns an optimum matching wich maximizes $|M\cap E_p|$.
\end{Proposition}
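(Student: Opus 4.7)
The plan is to derive the claim as an almost immediate corollary of Proposition~\ref{pre_prop_01}, so the proof will be very short. First I would observe that step~2 of Algorithm~\ref{preall_alg} correctly constructs $G_{cs}(P)$ as given in Definition~\ref{E_min_def_001} (this is justified by Proposition~\ref{optimum_iif_in_G_min_001}), and that step~3 produces exactly the weight function $w_p$ that appears in the statement preceding Proposition~\ref{pre_prop_01}. Thus, when step~4 runs, it is precisely the weighted instance $\{G_{cs}(P), w_p\}$ that is solved.

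Next I would argue that step~4 is well-defined, i.e., that $G_{cs}(P)$ actually admits a perfect matching. Since $G$ is assumed feasible, $\mathcal{M}(G,w)$ is nonempty, and Theorem~\ref{correspondence} gives $\mathcal{M}(G_{cs}(P))=\mathcal{M}(G,w)$, so $G_{cs}(P)$ has at least one perfect matching. Consequently any standard assignment-problem subroutine applied to $\{G_{cs}(P), w_p\}$ returns a minimum weight perfect matching $M$ of $\{G_{cs}(P), w_p\}$.

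Finally, I would invoke Proposition~\ref{pre_prop_01} with this $M$: its hypothesis is exactly that $M$ be a minimum weight perfect matching of $\{G_{cs}(P), w_p\}$, and its conclusion is exactly that $M$ is a minimum weight perfect matching of $\{G,w\}$ such that $|M\cap E_p|$ is maximum, which is what Algorithm~\ref{preall_alg} is required to return.

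There is no real obstacle here since Proposition~\ref{pre_prop_01} does the substantive work; the only thing to be careful about is making sure every step of the algorithm corresponds to the hypotheses of that proposition, in particular that the graph on which the final assignment problem is solved coincides with $G_{cs}(P)$ (not $G$) and that feasibility of $G$ transfers to $G_{cs}(P)$ via Theorem~\ref{correspondence}.
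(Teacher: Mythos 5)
Your proof is correct and takes essentially the same route as the paper, which simply cites Proposition~\ref{pre_prop_01}; you add the (sound but routine) checks that the algorithm's steps really produce $\{G_{cs}(P),w_p\}$ and that feasibility of $G$ transfers to $G_{cs}(P)$ via Theorem~\ref{correspondence}.
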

\begin{proof}
It follows directly from Proposition~\ref{pre_prop_01}.
\end{proof}

\begin{Theorem}
Given a weighted bipartite graph $\{G,w\}$, optimal dual prices $P$ and a subset of edges $E_p$. 
A minimum weight perfect matching that maximizes $|M\cap E_p|$, can be found in $O(\sqrt{n}\,m\log\,n)$ time.
\end{Theorem}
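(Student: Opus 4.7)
The plan is to establish the complexity bound by analyzing each step of Algorithm~\ref{preall_alg} in turn. Correctness has already been settled (via Proposition~\ref{pre_prop_01}), so the task reduces to a pure running-time analysis.

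First I would account for the construction of $G_{cs}(P)$ in line 2: by Proposition~\ref{optimum_iif_in_G_min_001} this is done in $O(m)$ time. Next, the weight function $w_p$ in line 3 is defined edgewise on $E_{cs}(P)$, so it can be built by a single pass over $E_{cs}(P)\subseteq E$, again in $O(m)$ time, provided $E_p$ is given in a form that supports constant-time membership queries (e.g.\ as a boolean flag on each edge, or as a hash set); if $E_p$ is given as a list, a linear preprocessing pass sets this up in $O(|E_p|+m)=O(m)$ time.

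The dominant cost is line 4, which requires solving the assignment problem on $\{G_{cs}(P),w_p\}$. The key observation is that $w_p$ takes values in $\{0,1\}$, so the maximum absolute weight is $W=1$. Therefore any algorithm that solves the assignment problem in $O(\sqrt{n}\,m\log(nW))$ time—such as the cost-scaling algorithms referenced in the introduction (\cite{Gabow,gabo_tarj1,Gold_ken2})—will, when applied to $\{G_{cs}(P),w_p\}$, run in
\[
O\bigl(\sqrt{n}\,|E_{cs}(P)|\log(n\cdot 1)\bigr)=O(\sqrt{n}\,m\log n)
\]
time, using $|E_{cs}(P)|\leq m$ and the fact that the vertex count of $G_{cs}(P)$ is bounded by $2n$.

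Summing the three contributions gives an overall running time of $O(m)+O(m)+O(\sqrt{n}\,m\log n)=O(\sqrt{n}\,m\log n)$, which is the claimed bound. The only delicate point—and the step where one must be careful—is citing an assignment-problem algorithm whose complexity scales logarithmically in $W$ (rather than polynomially), since otherwise the unit weights of $w_p$ would not be properly exploited; the cost-scaling framework does provide this $\log W$ dependence, which is exactly what makes the argument go through.
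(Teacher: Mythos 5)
Your proof is correct and follows essentially the same route as the paper: construct $G_{cs}(P)$ and $w_p$ in $O(m)$ time, then solve the assignment problem on $\{G_{cs}(P),w_p\}$ with the Gabow--Tarjan scaling algorithm, whose $O(\sqrt{n}\,m\log(nW))$ bound reduces to $O(\sqrt{n}\,m\log n)$ since $w_p$ takes values in $\{0,1\}$. The only difference is that you make explicit the $W=1$ observation that the paper leaves implicit when citing \cite{gabo_tarj1}.
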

\begin{proof}
By Proposition~\ref{optimum_iif_in_G_min_001} the subgraph $G_{cs}(P)$ can be obtained in $O(m)$ time.
Also, the weight function $w_p$ can be constructed in $O(m)$. 
Finally, by the algorithm given in~\cite[Section 2.1]{gabo_tarj1} we have that the minimum weight perfect matching 
of $\{G_{cs}(P),w_p\}$ can be obtained in $O(\sqrt{n}\,m\log\,n)$ time.
Giving a $O(\sqrt{n}\,m\log\,n)$ total time.
\end{proof}


\section{Strategies for unfeasible bipartite graphs}\label{FGoptNPM}

In this section we deal with the case when $G$ does not have necessarily a perfect matching,
and we are interested in minimum weight maximum cardinality matchings.
A simple way to compute $G_{cs}$ in this case, is by constructing a feasible weighted bipartite graph $\{G',w'\}$, such that a
minimum weight perfect matching of $\{G',w'\}$ induces a minimum weight maximum cardinality matching of $\{G,w\}$ and vice versa. 
Then we solve the problem that we are addressing in $\{G',w'\}$ and translate the solution obtained to a solution on the original weighted bipartite graph $\{G,w\}$.

We will give three graph transformation that satisfies the above conditions.
Let $\{G,w\}$ be an integer weighted bipartite graph with $U=\{ u_1,\ldots,u_n \}$, $V=\{ v_1,\ldots,v_s \}$, $n\geq s$, $m=|E|$, and $W=\max_{e\in E}\;\{|w(e)|\}$.


\begin{figure}[ht]
\begin{tabular}{c@{\extracolsep{10mm}}c@{\extracolsep{10mm}}c}
\begin{tikzpicture}[scale=0.45]
\begin{scope}[thin, inner sep=0.4mm, color=black, line width=2pt,fill=black!10]
\begin{scope}[color=blue,dashed]
\draw[thin] (0,0) -- node[above] {} (6,-4);
\draw[thin] (0,-1) -- node[above] {} (6,-5);
\draw[thin] (0,-2) -- node[above] {} (6,-6);
\draw[thin] (0,-3) -- node[above] {} (6,-7);
\draw[thin] (0,-4.4) -- node[above] {} (6,-8.4);
\draw[thin] (6,0) -- node[above] {} (0,-6);
\draw[thin] (6,-1) -- node[above] {} (0,-7);
\draw[thin] (6,-3) -- node[above] {} (0,-8.4);
\end{scope}
\draw[thin]	(0,0) -- node[very near start,above] {\small $3$} (6,0);
\draw[thin]	(0,0) -- node[near start,below] {\small $1$} (6,-1);
\draw[thin]	(0,-1) -- node[very near start,above] {\small $7$} (6,-3);
\draw[thin]	(0,-2) -- node[very near start,below] {\small $8$} (6,-1);
\draw[thin]	(0,-3) -- node[near start,above] {\small $2$} (6,-3);
\draw[thin]	(0,-4.4) -- node[very near start,above] {\small $1$} (6,-1);
\draw[thin]	(6,-4) -- node[very near start,above] {\small $3$} (0,-6);
\draw[thin]	(6,-4) -- node[near start,below] {\small $1$} (0,-7);
\draw[thin]	(6,-5) -- node[very near start,above] {\small $7$} (0,-8.4);
\draw[thin]	(6,-6) -- node[near start,below] {\small $8$ } (0,-7);			
\draw[thin]	(6,-7) -- node[very near start,below] {\small $2$} (0,-8.4);
\draw[thin]	(6,-8.4) -- node[very near start,below] {\small $1$} (0,-7);
\begin{scope}[inner sep=0.4mm]
\draw[thin] (0,0) node [shape=circle,draw,fill=black!20,label=left:$u_1$] {};
\draw[thin] (6,0) node [shape=circle,draw,fill=black!20,label=right:$v_1$] {};
\draw[thin] (0,-1) node [shape=circle,draw,fill=black!20,label=left:$u_2$] {};
\draw[thin] (6,-1) node [shape=circle,draw,fill=black!20,label=right:$v_2$] {};
\draw[thin] (0,-2) node [shape=circle,draw,fill=black!20,label=left:$u_3$] {};
\draw[thin] (6,-2) node {$\vdots$};
\draw[thin] (0,-3) node [shape=circle,draw,fill=black!20,label=left:$u_4$] {};
\draw[thin] (6,-3) node [shape=circle,draw,fill=black!20,label=right:$v_s$] {};
\draw[thin] (0,-3.5) node {$\vdots$};
\draw[thin] (0,-4.4) node [shape=circle,draw,fill=black!20,label=left:$u_n$] {};
\end{scope}
\begin{scope}[inner sep=0.4mm]
\draw[thin] (6,-4) node [shape=circle,draw,fill=black!20,label=right:$u'_1$] {};
\draw[thin] (0,-6) node [shape=circle,draw,fill=black!20,label=left:$v'_1$] {};
\draw[thin] (6,-5) node [shape=circle,draw,fill=black!20,label=right:$u'_2$] {};
\draw[thin] (0,-7) node [shape=circle,draw,fill=black!20, label=left:$v'_2$] {};
\draw[thin] (6,-6) node [shape=circle,draw,fill=black!20,label=right:$u'_3$] {};
\draw[thin] (0,-7.5) node {$\vdots$};
\draw[thin] (6,-7) node [shape=circle,draw,fill=black!20,label=right:$u'_4$] {};
\draw[thin] (0,-8.4) node [shape=circle,draw,fill=black!20,label=left:$v'_s$] {};
\draw[thin] (6,-7.5) node {$\vdots$};
\draw[thin] (6,-8.4) node [shape=circle,draw,fill=black!20,label=right:$u'_n$] {};
\end{scope}
\draw[thin] (-1.1, 0.4) .. controls (-1.6, 0.4) and (-1.1, -2.5) .. (-1.6, -2.5);
\draw[thin] (-1.6, -2.5) .. controls (-1.1, -2.5) and (-1.6, -5.4) .. (-1.1, -5.4);
\draw[thin] (-2.1, -2.5) node {$U$};
\draw[thin] (-1.1, -5.6) .. controls (-1.6, -5.6) and (-1.1, -7.3) .. (-1.6, -7.3);
\draw[thin] (-1.6, -7.3) .. controls (-1.1, -7.3) and (-1.6, -9) .. (-1.1, -9);
\draw[thin] (-2.2, -7.3) node {$V'$};
\draw[thin] (7.1, 0.4) .. controls (7.6, 0.4) and (7.1, -1.5) .. (7.6, -1.5);
\draw[thin] (7.6, -1.5) .. controls (7.1, -1.5) and (7.6, -3.4) .. (7.1, -3.4);
\draw[thin] (8.1, -1.5) node {$V$};
\draw[thin] (7.1, -3.6) .. controls (7.6, -3.6) and (7.1, -6.3) .. (7.6, -6.3);
\draw[thin] (7.6, -6.3) .. controls (7.1, -6.3) and (7.6, -9) .. (7.1, -9);
\draw[thin] (8.3, -6.3) node {$U'$};			
\end{scope}
\end{tikzpicture}
&
\begin{tikzpicture}[scale=0.45]
\begin{scope}[thin, inner sep=0.4mm, color=black, line width=2pt,fill=black!10]
\begin{scope}[color=blue,dashed]
\draw[thin] (0,0) -- node[above] {} (6,-4);
\draw[thin] (0,-1) -- node[above] {} (6,-5);
\draw[thin] (0,-2) -- node[above] {} (6,-6);
\draw[thin] (0,-3) -- node[above] {} (6,-7);
\draw[thin] (0,-4.4) -- node[above] {} (6,-8.4);
\end{scope}

\draw[thin]	(0,0) -- node[very near start,above] {\small $3$} (6,0);
\draw[thin]	(0,0) -- node[near start,below] {\small $1$} (6,-1);
\draw[thin]	(0,-1) -- node[very near start,above] {\small $7$} (6,-3);
\draw[thin]	(0,-2) -- node[very near start,below] {\small $8$} (6,-1);
\draw[thin]	(0,-3) -- node[near start,above] {\small $2$} (6,-3);
\draw[thin]	(0,-4.4) -- node[very near start,above] {\small $1$} (6,-1);
\draw[thin]	(6,-4) -- node[very near start,above] {\small $3$} (0,-6);
\draw[thin]	(6,-4) -- node[near start,below] {\small $1$} (0,-7);
\draw[thin]	(6,-5) -- node[very near start,above] {\small $7$} (0,-8.4);
\draw[thin]	(6,-6) -- node[near start,below] {\small $8$ } (0,-7);			
\draw[thin]	(6,-7) -- node[very near start,below] {\small $2$} (0,-8.4);
\draw[thin]	(6,-8.4) -- node[very near start,below] {\small $1$} (0,-7);
\begin{scope}[inner sep=0.4mm]
\draw[thin] (0,0) node [shape=circle,draw,fill=black!20,label=left:$u_1$] {};
\draw[thin] (6,0) node [shape=circle,draw,fill=black!20,label=right:$v_1$] {};
\draw[thin] (0,-1) node [shape=circle,draw,fill=black!20,label=left:$u_2$] {};
\draw[thin] (6,-1) node [shape=circle,draw,fill=black!20,label=right:$v_2$] {};
\draw[thin] (0,-2) node [shape=circle,draw,fill=black!20,label=left:$u_3$] {};
\draw[thin] (6,-2) node {$\vdots$};
\draw[thin] (0,-3) node [shape=circle,draw,fill=black!20,label=left:$u_4$] {};
\draw[thin] (6,-3) node [shape=circle,draw,fill=black!20,label=right:$v_s$] {};
\draw[thin] (0,-3.5) node {$\vdots$};
\draw[thin] (0,-4.4) node [shape=circle,draw,fill=black!20,label=left:$u_n$] {};
\end{scope}
\begin{scope}[inner sep=0.4mm]
\draw[thin] (6,-4) node [shape=circle,draw,fill=black!20,label=right:$u'_1$] {};
\draw[thin] (0,-6) node [shape=circle,draw,fill=black!20,label=left:$v'_1$] {};
\draw[thin] (6,-5) node [shape=circle,draw,fill=black!20,label=right:$u'_2$] {};
\draw[thin] (0,-7) node [shape=circle,draw,fill=black!20, label=left:$v'_2$] {};
\draw[thin] (6,-6) node [shape=circle,draw,fill=black!20,label=right:$u'_3$] {};
\draw[thin] (0,-7.5) node {$\vdots$};
\draw[thin] (6,-7) node [shape=circle,draw,fill=black!20,label=right:$u'_4$] {};
\draw[thin] (0,-8.4) node [shape=circle,draw,fill=black!20,label=left:$v'_s$] {};
\draw[thin] (6,-7.5) node {$\vdots$};
\draw[thin] (6,-8.4) node [shape=circle,draw,fill=black!20,label=right:$u'_n$] {};
\end{scope}
\draw[thin] (-1.1, 0.4) .. controls (-1.6, 0.4) and (-1.1, -2.5) .. (-1.6, -2.5);
\draw[thin] (-1.6, -2.5) .. controls (-1.1, -2.5) and (-1.6, -5.4) .. (-1.1, -5.4);
\draw[thin] (-2.1, -2.5) node {$U$};
\draw[thin] (-1.1, -5.6) .. controls (-1.6, -5.6) and (-1.1, -7.3) .. (-1.6, -7.3);
\draw[thin] (-1.6, -7.3) .. controls (-1.1, -7.3) and (-1.6, -9) .. (-1.1, -9);
\draw[thin] (-2.2, -7.3) node {$V'$};
\draw[thin] (7.1, 0.4) .. controls (7.6, 0.4) and (7.1, -1.5) .. (7.6, -1.5);
\draw[thin] (7.6, -1.5) .. controls (7.1, -1.5) and (7.6, -3.4) .. (7.1, -3.4);
\draw[thin] (8.1, -1.5) node {$V$};
\draw[thin] (7.1, -3.6) .. controls (7.6, -3.6) and (7.1, -6.3) .. (7.6, -6.3);
\draw[thin] (7.6, -6.3) .. controls (7.1, -6.3) and (7.6, -9) .. (7.1, -9);
\draw[thin] (8.3, -6.3) node {$U'$};			
\end{scope}
\end{tikzpicture}
&
\begin{tikzpicture}[scale=0.45]
\begin{scope}[thin, inner sep=0.4mm, color=black, line width=2pt,fill=black!10]
\draw[thin]	(0,0) -- node[very near start,above] {\small $3$} (6,0);
\draw[thin]	(0,0) -- node[near start,below] {\small $1$} (6,-1);
\draw[thin]	(0,-1) -- node[very near start,above] {\small $7$} (6,-3);
\draw[thin]	(0,-2) -- node[very near start,above] {\small $8$} (6,-1);
\draw[thin]	(0,-3) -- node[very near start,above] {\small $2$} (6,-3);
\draw[thin]	(0,-5) -- node[very near start,above] {\small $1$} (6,-1);
\begin{scope}[color=blue,dashed]
\draw[thin] (0,0) -- node[above] {} (6,-4);
\draw[thin] (0,-1) -- node[above] {} (6,-4);
\draw[thin] (0,-2) -- node[above] {} (6,-4);
\draw[thin] (0,-3) -- node[above] {} (6,-4);
\draw[thin] (0,-4) -- node[above] {} (6,-4);
\draw[thin] (0,-5) -- node[above] {} (6,-4);
\draw[thin] (0,0) -- node[above] {} (6,-5);
\draw[thin] (0,-1) -- node[above] {} (6,-5);
\draw[thin] (0,-2) -- node[above] {} (6,-5);
\draw[thin] (0,-3) -- node[above] {} (6,-5);
\draw[thin] (0,-4) -- node[above] {} (6,-5);
\draw[thin] (0,-5) -- node[above] {} (6,-5);
\end{scope}
\begin{scope}[inner sep=0.4mm]
\draw[thin] (0,0) node [shape=circle,draw,fill=black!20, label=left:$u_1$] {};
\draw[thin] (6,0) node [shape=circle,draw,fill=black!20,label=right:$v_1$] {};
\draw[thin] (0,-1) node [shape=circle,draw,fill=black!20,label=left:$u_2$] {};
\draw[thin] (6,-1) node [shape=circle,draw,fill=black!20,label=right:$v_2$] {};
\draw[thin] (0,-2) node [shape=circle,draw,fill=black!20,label=left:$u_3$] {};
\draw[thin] (6,-2) node {$\vdots$};
\draw[thin] (0,-3) node [shape=circle,draw,fill=black!20,label=left:$u_4$] {};
\draw[thin] (6,-4) node {$\vdots$};
\draw[thin] (6,-5) node [shape=circle,draw,fill=black!20,label=right:$v_n$] {};
\draw[thin] (6,-3) node [shape=circle,draw,fill=black!20,label=right:$v_s$] {};
\draw[thin] (0,-4) node {$\vdots$};
\draw[thin] (0,-5) node [shape=circle,draw,fill=black!20,label=left:$u_n$] {};
\end{scope}
\draw[thin] (0, -6) node { $U$};
\draw[thin] (6, -6) node { $V_a$};
\end{scope}
\end{tikzpicture}\\
(a) $\{G_d,w_d\}$ & (b) $\{G_s,w_s\}$ & (c) $\{G_a,w_a\}$
\end{tabular}
\caption{(a) The first doubling transformation, (b) the second doubling transformation, and (c) the artificial vertices transformation.
}
\label{to_doubling_transfromation_0}
\end{figure}
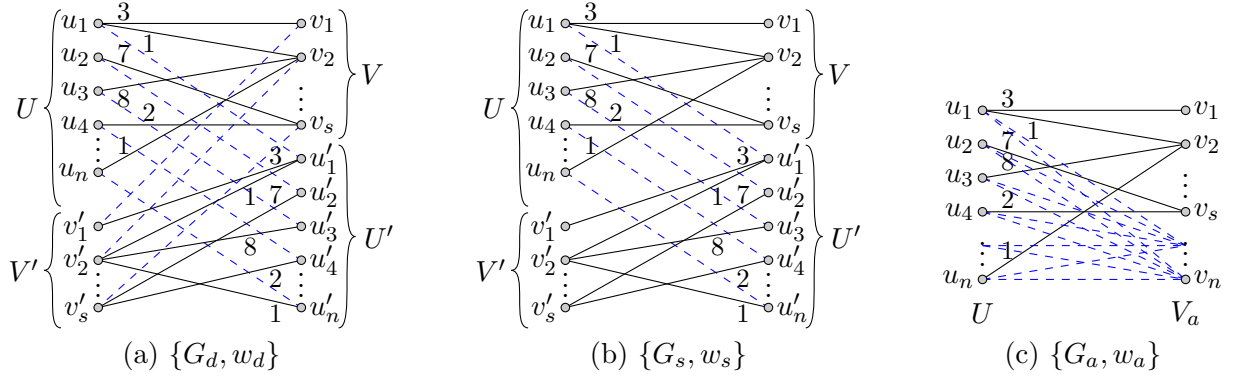


\subsection{The first doubling transformation}
Let $G_d=(U_d \sqcup V_d, E_d)$ with $U_d=U\sqcup V'$, $V_d=V\sqcup U'$, and
$E_d=E\cup E_f\cup E_U\cup E_V$, where $U'=\{ u'_1,\ldots,u'_n \}$ is a copy of $U$, $V'=\{v'_1,\ldots,v'_s\}$ is a copy of $V$, 
$E_f=\{ v'_ju'_i  \;|\; u_iv_j\in E \}$ is a flipped copy of $E$, $E_U=\{ u_iu'_i \;|\; u_i\in U \}$, and $E_V=\{ v'_jv_j \;|\; v_j\in V \}$, see figure~\ref{to_doubling_transfromation_0}. 
Also, let $w_d:E' \rightarrow \mathbb{Z}$ be given by:
\[
w_d(e)=
\begin{cases}
w(e) & \text{ for all } e\in E,\\
w(u_iv_j) & \text{ for all } e=v'_ju'_i\in E_f,\\
2sW & \text{ for all } e\in E_U\cup E_V.
\end{cases}
\]

Note that $E_U\cup E_V$ is a perfect matching of $G_d$. 
The following result gives us a correspondence between the optimum matchings of $\{G_d,w_d\}$ and $\{G,w\}$.

\begin{Proposition}\label{doubling_trans_theorem_0}
Let $\{G,w\}$ be an integer weighted bipartite graph.
If $N$ is a minimum weight perfect matching of $\{G_d,w_d\}$, then $M=N\cap E$ is an optimum matching of $\{G,w\}$.
\end{Proposition}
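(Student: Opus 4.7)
The plan is to show that the bridge weight $2sW$ in $\{G_d,w_d\}$ is calibrated so that any minimum weight perfect matching $N$ of $G_d$ projects under $M = N \cap E$ to an optimum matching of $\{G,w\}$. The argument has three pieces: a structural decomposition of $N$, a weight formula, and a dominance comparison.

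First I would decompose an arbitrary perfect matching $N$ of $G_d$. Writing $M = N\cap E$, $M_f = N\cap E_f$, $B_U = N\cap E_U$, $B_V = N\cap E_V$, and $I_U = \{i : u_iu'_i \in B_U\}$, $I_V = \{j : v'_jv_j \in B_V\}$, observe that the only edges of $G_d$ meeting $u_i$ (respectively $u'_i$) lie in $E\cup E_U$ (respectively $E_f\cup E_U$). Hence $u_i$ is covered by $M$ precisely when $i\notin I_U$, and $u'_i$ is covered by $M_f$ precisely when $i\notin I_U$; the symmetric statement holds on the $V$-side. Therefore $|M| = |M_f| = n - |I_U| = s - |I_V| =: k$, so $|B_U| = n-k$ and $|B_V| = s-k$, and the weight reads
\[
w_d(N) \;=\; w(M) + w_d(M_f) + 2sW\,(n+s-2k),
\]
where by construction $w_d(v'_ju'_i) = w(u_iv_j)$.

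Conversely, given any matching $M'$ of $G$ of size $k$, one can build a perfect matching $N'$ of $G_d$ by adjoining the flipped copy $\{v'_ju'_i : u_iv_j \in M'\}$ together with the bridges on the uncovered indices, achieving $w_d(N') = 2w(M') + 2sW(n+s-2k)$. Combined with the previous paragraph, this identifies
\[
\min_N w_d(N) \;=\; \min_k \bigl(\,2w^*(k) + 2sW(n+s-2k)\,\bigr),
\]
where $w^*(k)$ denotes the minimum weight of a matching of $G$ of cardinality $k$ and the outer minimum ranges over admissible $k$.

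Finally, $|w^*(k)| \leq kW \leq sW$ bounds the variation of $2w^*$: for any admissible $k<k'$, one has $2w^*(k')-2w^*(k)\leq 2(k+k')W \leq (4s-2)W < 4sW(k'-k)$, so incrementing $k$ strictly decreases the objective. Consequently the minimum is attained at the largest admissible $k$, namely the matching number of $G$, and then at a minimum-weight matching of that cardinality. Thus $M = N\cap E$ is of maximum cardinality and minimum weight, i.e., an optimum matching of $\{G,w\}$. The main obstacle is the structural step: verifying that the bridge index sets $I_U, I_V$ are forced to coincide with the vertices missed by $M$ and $M_f$ (so $|M| = |M_f|$ and the bridge count is exactly $n+s-2k$); once this alignment is established, the weight comparison is elementary.
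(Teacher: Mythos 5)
Your proof is correct and is essentially the paper's own argument in different packaging: the same decomposition $w_d(N)=w(M)+w_d(M_f)+2sW(n+s-2|M|)$ (the paper states the cardinality alignment $|M|=|M_f|$ via $flip$ more tersely), the same lifting $M'\cup flip(M')\cup\{\text{bridge edges on uncovered vertices}\}$ of a matching of $G$ to a perfect matching of $G_d$, and the same observation that the $2sW$ bridge term dominates any change in matching weight, only organized as a direct minimization over the cardinality $k$ rather than the paper's contradiction with the WLOG $w(M)\le w(M_f)$. The one shared caveat is that the strict inequality (your $(4s-2)W<4sW(k'-k)$, the paper's ``$>$'') implicitly assumes $W>0$, a degenerate case both proofs ignore.
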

\begin{proof}
Given a matching $A$ of $G$, let $flip(A)=\{u'v' \, | \, uv\in A\}$.
If $M_f=N\cap E_f$, then $M$ and $M_f$ have the same cardinality because $M$ and $flip(M_f)$ cover the same vertices of G.
Now, assume that $M$ is not an optimum matching, that is, there exist a matching $M^*$ such that $|M|< |M^*|$ or $w(M) > w(M^*)$. 
Let $N'=M^*\cup flip(M^*)\cup I$, where $I=\{zz'\, | \, z\in V(G) \text{ is not cover by } M^*\}$. 
Since we can assume without loss of generalization that $w(M)={\rm min}(w(M),w(M_f))$, then
\[
w(N)\geq (n+s-2|M|)2sW+2w(M)> (n+s-2|M^*|)2sW+2w(M^*)=w(N');
\]
a contradiction to the fact that $N$ is of minimum weight. 
\end{proof}

Let us denote by MWPM the Minimum Weight Perfect Matching problem and by OM the Optimum Matching problem.
Now, we will show that both problems are reducible.

\begin{Proposition}\label{MWPM_MWMM_01}
The MWPM and OM problems are mutually reducible.
\end{Proposition}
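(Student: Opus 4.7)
The plan is to establish the two reductions separately, noting that the nontrivial direction has already been handled by the doubling construction.

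For the direction OM $\to$ MWPM, I would invoke Proposition~\ref{doubling_trans_theorem_0} directly. Given an instance $\{G,w\}$ of the optimum matching problem, the bipartite graph $G_d$ is constructed in $O(n+m)$ time and is always feasible because $E_U\cup E_V$ is a perfect matching of $G_d$. Hence I can feed $\{G_d,w_d\}$ to any MWPM solver, recover a minimum weight perfect matching $N$, and return $M=N\cap E$, which by Proposition~\ref{doubling_trans_theorem_0} is an optimum matching of $\{G,w\}$. This establishes a polynomial-time (in fact linear-time) reduction from OM to MWPM.

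For the direction MWPM $\to$ OM, the reduction is essentially the identity. Suppose $\{G,w\}$ is an instance of MWPM, so $G$ is feasible and has some perfect matching. Since any perfect matching covers all $n$ vertices of each side, the maximum cardinality of a matching in $G$ equals $n$. Therefore every optimum matching of $\{G,w\}$ (minimum weight among those of maximum cardinality) is in particular a perfect matching, and conversely every minimum weight perfect matching is an optimum matching. Hence any OM solver applied to the very same instance $\{G,w\}$ returns a minimum weight perfect matching, giving a trivial reduction.

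Combining these two reductions shows that the MWPM and OM problems are mutually polynomial-time reducible. The only step requiring actual work is the first, but this is entirely subsumed by the already proven Proposition~\ref{doubling_trans_theorem_0}; the converse is immediate from the definition of optimum matching in a feasible bipartite graph, so I do not anticipate any substantive obstacle.
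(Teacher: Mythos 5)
Your proof is correct and follows essentially the same route as the paper: the MWPM-to-OM direction is the trivial observation that in a feasible graph optimum matchings and minimum weight perfect matchings coincide, and the OM-to-MWPM direction is exactly the invocation of Proposition~\ref{doubling_trans_theorem_0} via the first doubling transformation. Your version is in fact slightly more explicit than the paper's about why $G_d$ is always feasible and about the cost of the reduction, but no substantive difference.
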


\begin{proof}
First, the reduction from MWPM to OM is trivial, because a perfect matching is of maximum cardinality. 
On the other hand, the reduction from OM to MWPM is obtained by Proposition~\ref{doubling_trans_theorem_0}.
\end{proof}

The graph $G_d$ has the disadvantage that the weights of the edges $E_U\cup E_V$ increase with $s$.
Therefore it can produce overflow errors for the computer data types on relatively small graphs.
In order to avoid this disadvantage we present other two transformations, which also give us a reduction between OM and MWPM.
However, this two transformations need to assume that $G$ has a matching covering the small side $V$. 


\subsection{The second doubling transformation}
To the knowledge of the authors, this is a new variant of the previous transformation.
Let $G_s=G_d\setminus E_V$ and $w_s:E' \rightarrow \mathbb{Z}$ be given by: 
\[
w_s(e)=
\begin{cases}
w(e) & \text{ for all } e\in E \\
w(u_iv_j) & \text{ for all } e=v'_ju'_i\in E_f \\
k & \text{ for all } e\in E_U,
\end{cases}
\]
where $k\in \mathbb{Z}$ is any constant.

In this case we cannot guarantee that $G_s$ has a perfect matching. 
However, $G_s$ has a perfect matching if and only if $G$ has a matching that covers all the vertices of the small side $V$.
The following result gives us a correspondence between the optimum matchings of $\{G_s,w_s\}$ and $\{G,w\}$.

\begin{Proposition}\label{sdt_prop}
Let $\{G,w\}$ be an integer weighted bipartite graph.
If $N$ is a minimum weight perfect matching of $\{G_s,w_s\}$, then $M=N\cap E$ is an optimum matching of $\{G,w\}$ which covers $V$.
\end{Proposition}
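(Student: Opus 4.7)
The plan is to closely mirror the argument used for Proposition~\ref{doubling_trans_theorem_0}, adapted to the absence of $E_V$. First I would decode the structure of the given perfect matching $N$. Since the only edges incident to any $v_j \in V$ in $G_s$ lie in $E$, every $v_j$ must be saturated by an edge of $M = N \cap E$; hence $M$ already covers $V$ entirely and $|M|=s$. Dually, each $v'_j \in V'$ is saturated by some edge of $E_f$, so setting $M_f = N \cap E_f$ and $\widetilde{M} = \mathrm{flip}(M_f) = \{u_i v_j \mid v'_j u'_i \in M_f\}$ produces a second matching of $G$ which also covers $V$ and has size $s$. Counting the saturations of the $U$-side, where each $u_i$ is hit either via $E$ or via $E_U$, gives $|N \cap E_U| = n-s$ and the weight decomposition
\[
w_s(N) \;=\; w(M) + w(\widetilde{M}) + k(n-s).
\]

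Next I would argue by contradiction. Suppose $M$ were not an optimum matching of $\{G,w\}$; since $|M|=s$ is already the maximum possible cardinality, non-optimality forces the existence of some $M^*$ covering $V$ with $w(M^*) < w(M)$. Choosing $M^*$ to be of minimum $w$-weight among all $V$-covering matchings of $G$ additionally guarantees $w(M^*) \leq w(\widetilde{M})$. I would then lift $M^*$ to a perfect matching of $G_s$ via
\[
N^* \;=\; M^* \cup \mathrm{flip}(M^*) \cup \{u_i u'_i : u_i \text{ is not saturated by } M^*\},
\]
and a direct case check on the four vertex classes $U, V, U', V'$ confirms that $N^*$ is a perfect matching of $G_s$ and that $|N^* \cap E_U| = n - s$, so $w_s(N^*) = 2w(M^*) + k(n-s)$.

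Finally, combining the two decompositions with the strict inequality $w(M^*) < w(M)$ and $w(M^*) \leq w(\widetilde{M})$ yields
\[
w_s(N^*) \;=\; 2w(M^*) + k(n-s) \;<\; w(M) + w(\widetilde{M}) + k(n-s) \;=\; w_s(N),
\]
contradicting the minimality of $N$ in $\{G_s, w_s\}$. The main subtlety to watch is that the decomposition of $w_s(N)$ genuinely involves two a priori distinct matchings $M$ and $\widetilde{M}$ of $G$, so one cannot compare $w(M^*)$ to $w(M)$ on a one-for-one basis and must exploit both inequalities $w(M^*)<w(M)$ and $w(M^*)\le w(\widetilde{M})$ simultaneously. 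The constant $k$ cancels harmlessly in the comparison because $|N\cap E_U|=|N^*\cap E_U|=n-s$, which is precisely why $w_s$ is allowed to use an arbitrary integer $k$ on $E_U$.
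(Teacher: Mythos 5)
Your proof is correct, and it is in fact more careful than the paper's own. The paper disposes of Proposition~\ref{sdt_prop} with a one-line appeal to the identity $w_s(N)=2\,w(N\cap E)+(n-s)k$ for ``any perfect matching $N$ of $G_s$''; taken literally this identity fails, because the $E_f$-part of $N$ need not be the flip of its $E$-part (already for $U=\{u_1,u_2\}$, $V=\{v_1,v_2\}$ with $w(u_1v_1)=w(u_2v_2)=0$, $w(u_1v_2)=w(u_2v_1)=1$, the perfect matching $N=\{u_1v_1,u_2v_2\}\cup\{v'_1u'_2,v'_2u'_1\}$ of $G_s$ has $w_s(N)=2\neq 2\,w(N\cap E)=0$). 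The only decomposition available in general is the one you use, $w_s(N)=w(M)+w(\widetilde M)+k(n-s)$ with $\widetilde M=\mathrm{flip}(N\cap E_f)$ an a priori different $V$-covering matching of $G$, and your argument --- mirroring the proof of Proposition~\ref{doubling_trans_theorem_0}: take $M^*$ of minimum weight among $V$-covering matchings, lift it to $N^*=M^*\cup\mathrm{flip}(M^*)\cup\{u_iu'_i: u_i \text{ unsaturated by } M^*\}$, and combine $w(M^*)<w(M)$ with $w(M^*)\le w(\widetilde M)$ --- is exactly what is needed to close that gap; your counting of $|N\cap E_U|=n-s$ on both sides, which makes the arbitrary constant $k$ cancel, is also the right justification for why $k$ may be arbitrary. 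What the paper's shortcut buys is brevity, and its identity does become true for a minimum-weight $N$, but only as a consequence of an exchange argument of precisely the kind you spelled out (it shows $w(M)=w(\widetilde M)$ for optimal $N$), so your version can be read as the corrected, fully detailed form of the intended proof rather than a genuinely different strategy.
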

\begin{proof}
First, since $N$ is perfect, $M$ must cover $V$.
The optimality of $M$ follows from the fact that the cost of any perfect matching 
$N$ of $G_s$ is $w(N)=2w(N\cap E)+(n-s)k$ and $(n-s)k$ is constant.
\end{proof}

This transformation has the advantage that the maximum weight of the edges is not increased.
However, when $|U|-|V|$ is small enough ($G$ is almost balanced), we need to solve an instance $G_s$ with the double of edges.
In order to avoid this disadvantage we present a last transformation.


\subsection{The artificial vertices transformation}
The last construction consists in balancing $G$ by adding $|U|-|V|$ ``artificial" vertices to $V$.
More precisely, let $G_a=(U \sqcup V_a, E_a)$ where $V_a=V\cup \{v_{s+1},\ldots,v_n \}$, 
$D=\{ u_iv_j \;|\; u_i\in U \text{ and } v_j\in V_a\setminus V \}$,  and $E_a=E \cup D$; see  figure~\ref{to_doubling_transfromation_0}.
Also, let $w_a:E' \rightarrow \mathbb{Z}$ given by
\[
w_a(e)=
\begin{cases}
w(u_iv_j) & \text{ if } e\in E,\\
k  & \text{ if } e\in D,
\end{cases}
\]
where $k\in \mathbb{Z}$ is any constant.

Clearly $G_a$ has a perfect matching if and only if $G$ has a matching that covers $V$.

\begin{Proposition}
Let $\{G,w\}$ be an integer weighted bipartite graph.
If $N$ is a minimum weight perfect matching of $\{G_a,w_a\}$, then $M=N\cap E$ is an optimum matching of $\{G,w\}$ which covers $V$.
\end{Proposition}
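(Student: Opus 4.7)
The plan is to mirror the proof of Proposition~\ref{sdt_prop}: show that the artificial vertex construction sets up a weight-preserving (up to a constant) bijection between perfect matchings of $G_a$ and matchings of $G$ covering $V$, then use this to transfer optimality.

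First I would verify the two easy structural facts. Since $N$ is a matching of $G_a$, clearly $M=N\cap E$ is a matching of $G$. To see that $M$ covers $V$, observe that every $v\in V$ is saturated by $N$, and by construction the only edges of $E_a$ incident to $v$ are edges of $E$ (the edge set $D$ connects $U$ only to the artificial vertices $V_a\setminus V$). Hence the edge of $N$ incident to $v$ lies in $M$.

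Next I would handle cardinality. Because $M$ covers $V$ we have $|M|=s$, and since $n\geq s$ every matching of $G$ has at most $s$ edges, so $M$ already has maximum cardinality. It remains to show $M$ has minimum weight among maximum cardinality matchings of $G$; by the same counting argument, these are exactly the matchings of $G$ that cover $V$.

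The core step is the weight comparison. Because $N$ is perfect in $G_a$ and $|U|=n$, $N$ has exactly $n$ edges; of these, $|M|=s$ lie in $E$ (using $s$ vertices of $U$) and the remaining $n-s$ lie in $D$ (matching the other $n-s$ vertices of $U$ bijectively to the $n-s$ artificial vertices). Hence
\begin{equation*}
w_a(N)=w(M)+(n-s)k.
\end{equation*}
Conversely, given any matching $M^*$ of $G$ covering $V$, one has $|M^*|=s$, and since $D$ is complete bipartite between $U$ and the artificial vertices, the $n-s$ uncovered vertices of $U$ can be matched arbitrarily and bijectively to the $n-s$ artificial vertices to produce a perfect matching $N^*$ of $G_a$ with $w_a(N^*)=w(M^*)+(n-s)k$.

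Finally, minimality of $N$ in $G_a$ gives $w_a(N)\leq w_a(N^*)$, and the constant $(n-s)k$ cancels, yielding $w(M)\leq w(M^*)$. So $M$ is of minimum weight among matchings of $G$ covering $V$, hence an optimum matching. The main obstacle, if any, is really just bookkeeping: identifying max-cardinality matchings of $G$ with matchings covering $V$ (which requires $n\geq s$ and nonemptiness of the set, both guaranteed by the hypothesis that a perfect matching $N$ of $G_a$ exists), and checking that the extension $M^*\mapsto N^*$ is always possible because $D$ is complete bipartite.
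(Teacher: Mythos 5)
Your proof is correct and follows essentially the same route the paper intends: the paper's proof simply invokes the argument of Proposition~\ref{sdt_prop}, namely that every perfect matching $N$ of $G_a$ satisfies $w_a(N)=w(N\cap E)+(n-s)k$ with $(n-s)k$ constant, which is exactly your weight identity. Your additional bookkeeping (that $M$ covers $V$, that matchings covering $V$ are precisely the maximum cardinality matchings once one exists, and that any such matching extends to a perfect matching of $G_a$ via the complete bipartite set $D$) just makes explicit what the paper leaves implicit.
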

\begin{proof}
Follows by similar arguments of those in Proposition~\ref{sdt_prop}.
\end{proof}

This transformation has the advantage that the number of edges of $G_a$ is only increased a little when $|U|-|V|$ is sufficiently small.
In the counterpart, the density of $G_a$ is increased considerably when $|U|-|V|$ is big.

We can use any of these transformations to solve the problem of finding all the edges that 
occur in any optimum maximum cardinality matching of a weighted bipartite graph $\{G,w\}$. 
We only need to use a transformation $\{G',w'\}$ and find all the edges $E'_{opt}$ that occur in any minimum weight perfect matching of the transformation. 
And the solution to the original instance will be given by $E_{opt}=E'_{opt}\cap E$.
Something similar works for the preallocation problem where we want to maximize the preferences over the optimum maximum cardinality matchings.
However, this does not work for efficiently solve the problem of enumerating all the optimum maximum cardinality matchings. 
Because the same matching can be enumerated a non constant number of times. For instance, consider a complete bipartite graph
with weights $w(e)=1$ for all $e\in E$. This is due to the fact that the relation between the optimum matchings of these 
transformations and the original instance is not a one-to-one relation.

\end{document}